\numberwithin{equation}{section}
\numberwithin{figure}{section}
\theoremstyle{plain}
\newtheorem{theorem}{Theorem}\numberwithin{theorem}{section}
\newtheorem{lemma}{Lemma}\numberwithin{lemma}{section}
\newtheorem{proposition}{Proposition}\numberwithin{proposition}{section}
\newtheorem{corollary}{Corollary}\numberwithin{corollary}{section}
\theoremstyle{definition}
\numberwithin{definition}{section}
\theoremstyle{remark}
\newtheorem{remark}{Remark}\numberwithin{remark}{section}
\newcommand{\R}{\mathbb{R}}
\newcommand{\hj}{{\mathbf{j}}}
\newcommand{\F}{{}_1F_2}
\newcommand{\rb}[1]{\left(#1\right)}
\newcommand{\qb}[1]{\left[#1\right]}
\newcommand{\vb}[1]{\left\vert#1\right\vert}
\newcommand{\bJ}{\mathbb{J}}
\newcommand{\bH}{\mathbb{H}}
\title{Partial fraction expansions\\ and zeros of Hankel transforms}
\author{Yong-Kum Cho\footnote{ykcho@cau.ac.kr. Department of Mathematics, College of Natural Sciences, Chung-Ang University,
84 Heukseok-Ro, Dongjak-Gu, Seoul 06974, Korea.}
\and  Seok-Young Chung\footnote{seok-young.chung@ucf.edu. Department of Mathematics, University of Central Florida,
4393 Andromeda Loop N., Orlando, FL 32816, USA.}\\
\and Young Woong Park\footnote{ywpark1839@gmail.com. Department of Mathematics, College of Natural Sciences, Chung-Ang University,
84 Heukseok-Ro, Dongjak-Gu, Seoul 06974, Korea.}}
\date{}
\begin{document}

\maketitle

{\bf Abstract.} It is proved by the method of partial fraction expansion
and Sturm's oscillation theory that the zeros of certain Hankel transforms are
all real, simple and distributed one by one between consecutive zeros of Bessel functions.
As an application, we obtain a list of sufficient conditions as well as necessary conditions on parameters
for which ${}_1F_2$ hypergeometric functions belong to the Laguerre-P\'olya class.

\medskip

{\bf Keywords.} {Bessel functions, Fourier transforms, Hankel transforms,}

{Laguerre-P\'olya class, partial fraction expansions, transference principle.}

\medskip

{\bf 2020 MSC. } 30D10, 33C10, 34C10, 44A15.

\section{Introduction}
This paper deals with partial fraction expansions for the ratios of Hankel transforms to Bessel functions
and its applications to the theory of zeros of Hankel transforms, which extend those analogous results of
Hurwitz and P\'olya \cite{P} concerning Fourier cosine and sine transforms.

The Hankel transform under consideration is defined by
\begin{equation}\label{H1}
\mathcal{H}_\nu(f)(z) = \int_0^1 f(t)J_\nu(zt)\sqrt{zt}\, dt
\end{equation}
for $\,z\in\mathbb{C}\setminus\{0\},$ where $J_\nu(z)$ stands for the Bessel function of the first kind of order $\nu$
and $f(t)$ is a real-valued function supported in the unit interval.
It will be assumed throughout that $\nu$ is real with $\,\nu>-1.$

From a complex analysis point of view, it is advantageous to consider
the \emph{normalized Hankel transform} defined by
\begin{equation}\label{H2}
\bH_\nu (f)(z) = \int_0^1 t^{\nu+1/2} f(t) \bJ_\nu(zt) \,dt\quad(z\in\mathbb{C}),
\end{equation}
where the kernel $\bJ_\nu(z)$ stands for the entire function given by
\begin{equation*}
\bJ_\nu(z) = \sum_{m=0}^\infty \frac{(-1)^m (z/2)^{2m}}{m!\,(\nu+1)_m} = \Gamma(\nu+1)(z/2)^{-\nu} J_\nu(z).
\end{equation*}

 Due to the readily-verified relation
\begin{equation}\label{H3}
\,\mathcal{H}_\nu(f)(z) = \frac{\sqrt 2\,(z/2)^{\nu+1/2}}{\Gamma(\nu+1)}\,\bH_\nu(f)(z),
\end{equation}
it is evident that $\,J_\nu(z), \bJ_\nu(z)$ share zeros in common for $\,z\ne 0\,$
and so do Hankel transforms $\,\mathcal{H}_\nu(f)(z), \bH_\nu(f)(z).$ A distinctive aspect is that the normalized Hankel transform
$\bH_\nu(f)(z)$, if exists, is a real entire function.

In the special cases $\,\nu =\pm 1/2,$  \eqref{H1} reduces to
\begin{equation}\label{HP1}
U(z) = \int_0^1 f(t)\cos zt\,dt,\quad V(z) = \int_0^1 f(t)\sin zt\,dt,
\end{equation}
except multiplicative factor $\sqrt{2/\pi\,},$ the Fourier cosine and sine transforms, respectively.\footnote{
For this reason, the Hankel transform \eqref{H1} is often referred to as the Fourier-Bessel or generalized Fourier transform.}
In a simplified version, it is shown by Hurwitz and P\'olya \cite{P} that $U(z), V(z)$ have an infinity of zeros which are all real and simple
when $f(t)$ is positive and increasing for $\,0<t<1,$ unless it is a step function.
In addition, $U(z), V(z)$ have exactly one zero in each of intervals
$$\big((m-1/2)\pi,\,(m+1/2)\pi\big),\quad \big(m\pi,\,(m+1)\pi\big),\quad m=1, 2, \cdots,$$
separately, and have no zeros elsewhere.

The idea of proof is based on partial fraction expansions
\begin{align}\label{HP2}
\frac{U(z)}{\,z\cos z\,} &= \frac{U(0)}{z} + \sum_{m=1}^\infty\frac{(-1)^m U\left[(m-1/2)\pi\right]}{(m-1/2)\pi}\nonumber\\
&\qquad\qquad\times\,\left[\frac{1}{z-(m-1/2)\pi} + \frac{1}{z+ (m-1/2)\pi}\right],
\end{align}
\begin{equation}\label{HP3}
\frac{V(z)}{\,z\sin z\,} = \frac{V'(0)}{z} +
\sum_{m=1}^\infty (-1)^m V(m\pi)\left(\frac{1}{z-m\pi} + \frac{1}{z+ m\pi}\right),
\end{equation}
both of which are valid if $f(t)$ is integrable. By observing that the Fourier coefficients
$\,U\left[(m-1/2)\pi\right], \,V(m\pi)\,$ alternate in sign when $f(t)$ is positive and increasing,
Hurwitz and P\'olya obtained the stated results by inspecting partial sums of \eqref{HP2}, \eqref{HP3}
(see section 3 for the detail).

In this paper we aim at investigating the nature and distribution of zeros of Hankel transforms in an analogous manner.
For this purpose, we shall establish partial fraction expansions of the form
\begin{equation*}\label{H5}
 \frac{\,\mathcal{H}_\nu (f)(z)\,}{z^\lambda J_\mu (z)}  =\frac{b}{z} +
 \sum_{m=1}^\infty c_m\rb{ \frac{1}{z-j_{\mu,m}} + \frac{1}{z+j_{\mu,m}}}
 \end{equation*}
 in the range $\,-1<\mu<\nu+2,$ where $\big(j_{\mu, m}\big)$ denotes all positive zero of $J_\mu(z)$ and
 $\lambda$ is chosen so as to determine
$\,b, c_m\,$ explicitly. By exploiting these expansions, we shall modify the method of Hurwitz and P\'olya
to prove that the zeros of $\mathcal{H}_\nu(f)(z)$ are distributed one by one between consecutive zeros of $J_\mu(z)$,
provided that $c_m$ keeps constant sign.

For general $\mu, \nu$ in the range $\,-1<\mu<\nu+2,$
it is seemingly impossible to determine whether $c_m$ keeps constant sign.
In the particular case $\,\mu =\nu,$ however, we shall give a set of
sufficient conditions on $f(t)$ deduced on the basis
of Sturm's comparison theorems and its modification.

A great deal of special functions encountered in mathematical physics arise as the images of
Hankel transforms of type \eqref{H1} or \eqref{H2}. In this paper we shall focus on
$\F$  hypergeometric functions of the form
$$\Phi(z)= {}_1F_2 \left[\begin{array}{c}
a\\ b, \,c\end{array}\biggr| - \frac{\,z^2}{4} \right ] \quad(z\in\mathbb{C}),$$
which can be identified as the images of \eqref{H2} under certain circumstances.
In application of our results, we shall focus on determining the region of
$\,(a, b, c)\in\R_+^3\,$ for which $\Phi(z)$ belongs to the Laguerre-P\'olya class, that is, the class of all real entire functions $g(z)$
having only real zeros, if any, which are representable in the product form
\begin{equation}\label{LP1}
g(z) = A z^\ell e^{- \alpha z^2 + \beta z} \prod_{m=1}^\omega \left(1-\frac{z}{\sigma_m}
\right) e^{\frac{z}{\,\sigma_m\,}},\quad 0\le\omega\le \infty,
\end{equation}
where $\, A, \alpha, \beta\,$ are real with $\,\alpha\ge 0,$ $\ell$ is a nonnegative integer and $\big(\sigma_k\big)$
is a sequence of non-zero reals with $\,\sum_{m=1}^\omega 1/\sigma_m^2 <\infty\,$ (see \cite{DR}, \cite{So}).

The problem of identifying the exact  set of positive parameters for which $\Phi(z)$ belongs to the Laguerre-P\'olya class
is a long-standing open problem in the theory of entire functions (see \cite{CCs}, \cite{Hi}, \cite{So}).

To describe briefly, we shall assign to each fixed $\,a>0\,$ an unbounded hyperbolic region $\mathcal{P}_a\subset\R_+^2\,$
having the property that $\Phi(z)$ has only complex zeros for each $\,(b, c)\in\mathcal{P}_a.$  In the complement of $\mathcal{P}_a,$
we apply our results on Hankel transforms to specify the range of $(b, c)$
for which $\Phi(z)$ belongs to the Laguerre-P\'olya class (see Figure \ref{Fig2} in a particular case). Although there
still remains a large part of the complement of $\mathcal{P}_a$ left undetermined, our result provides a considerable improvement
of the known parameter patterns for the Laguerre-P\'olya class available in the literature.

We organize the present paper as follows.

In section 2, we establish the aforementioned partial fraction expansions
based on Cauchy's residue theorem. In section 3, we revisit
the method of Hurwitz and P\'olya. In sections 4, 5,
we describe the nature and distribution of zeros of Hankel transforms and identify $\bH_\nu(f)(z)$ as
a member of the Laguerre-P\'olya class under the assumption that $\bH_\nu(f)(j_{\mu, m})$ alternates in sign.
In section 6, we modify Sturm's comparison theorems to give sufficient conditions on $f(t)$ for the sign alternation of
$\bH_\nu(f)(j_{\nu, m}).$ In the last section 7, we consider $\F$ hypergeometric functions of the above type
and deal with the problem of identifying parameters for the Laguerre-P\'olya class.

\section{Partial fraction expansions}
It is classical that Bessel function $J_\nu(z)$ of order $\,\nu>-1\,$
has an infinity of zeros which are all real and simple.
In addition, the zeros are symmetrically distributed about the origin and it is standard
to denote all positive zeros by $\rb{j_{\nu, m}}$ arranged in ascending order of magnitude (\cite[\S 15.2-15.22]{Wa}).

Our principal result states as follows.

\begin{theorem}\label{theoremP1}
Let $\,\nu>-1, \,-1<\mu<\nu+2\,$ and $f(t)$ be an integrable function, defined for $\,0<t<1,$ subject to the condition
\begin{equation}\label{IA}
\int_0^1 t^{\nu+1/2}|f(t)| dt<\infty\quad\text{when}\quad \nu<- 1/2.
\end{equation}
Put $\,\lambda =\nu-\mu+3/2\,$ and $\,\mathcal{D}_\mu \equiv \mathbb{C} \setminus \left\{0, \,\pm j_{\mu,1},\,\pm j_{\mu,2},\,\cdots\right\}$.
If $\mathcal{H}_\nu(f)(z)$ denotes the Hankel transform of $f(t)$ defined by \eqref{H1}, then
\begin{align}\label{Paf1}
 &\frac{\,\mathcal{H}_\nu(f)(z)\,}{\,z^{\lambda}J_\mu (z)\,} =
 \frac{b}{z} - \sum_{m=1}^\infty a_m \rb{ \frac{1}{z-j_{\mu,m}} + \frac{1}{z+j_{\mu,m}}}\quad (z\in\mathcal{D}_\mu),\\
 &\quad\text{where}\qquad \left\{\begin{aligned}
&{ \,\,b\,\,=\frac{\,\Gamma(\mu+1)}{ 2^{\nu-\mu}\Gamma(\nu+1)}\int_0^1 t^{\nu+1/2} f(t)dt,}\\
&{a_m =  \frac{\mathcal{H}_\nu (f)\rb{j_{\mu,m}} }{\,j_{\mu,m}^{\lambda} J_{\mu+1}\rb{j_{\mu,m}}},\quad m=1, 2, \cdots.}
\end{aligned}\right.\nonumber
 \end{align}
In addition, the series of \eqref{Paf1} converges absolutely for each $\,z\in \mathcal{D}_\mu\,$ and
 the convergence is uniform on each compact subset of $\mathcal{D}_\mu$.
\end{theorem}

\subsection{Inequalities of Bessel functions}
As one of the key analytical tools in what follows, we shall establish
the following inequalities for Bessel functions of real order.

\begin{lemma}\label{lemmaP1}
If $\nu$ is real with $\,\nu>-1,$ then there exists a positive constant $\,c_\nu>0,$ depending only on $\nu$, such that
for all $\,z\in\mathbb{C}\setminus\{0\},$
\begin{align*}
\left|J_\nu(z)\right| \le c_\nu\,\frac{ \exp\left(|{\rm{Im}}\, z\right|)}{\sqrt{|z|}}\cdot
\left\{\begin{aligned}
&{\quad 1} &{\big(\nu\ge - 1/2\big),}\\
&{1+ |z|^{\nu+1/2}} &{\big(\nu<-1/2\big).} \end{aligned}\right.
\end{align*}
\end{lemma}

\begin{proof}
A version of Hankel's asymptotic formula \cite[\S 7.21]{Wa} reads
\begin{align}\label{ZH}
J_\nu(z) &= \sqrt{\frac{2}{\pi z}\,}\Bigg\{\cos \Big(z- \frac{\nu\pi}{2} -\frac{\pi}{4}\Big)
\left[ 1+ O\left(z^{-2}\right)\right]\nonumber\\
&\qquad\qquad  -\sin\Big(z- \frac{\nu\pi}{2} -\frac{\pi}{4}\Big)
\left[ \frac{4\nu^2-1}{8z} + O\left( z^{-3}\right)\right]\Bigg\}
\end{align}
as $\,z\to\infty,$ provided that $\,|\arg\, z|<\pi.$ By obvious estimates
$$
\left|\cos \Big(z- \frac{\nu\pi}{2} -\frac{\pi}{4}\Big)\right|\le \exp(|{\rm{Im}}\, z|),
\,\, \left|\sin \Big(z- \frac{\nu\pi}{2} -\frac{\pi}{4}\Big)\right|\le \exp(|{\rm{Im}}\, z|),$$
the asymptotic formula \eqref{ZH} implies that we can find $\,r>1\,$ so that
\begin{equation}\label{HL1}
|J_\nu(z)| \le \frac{ \exp\left(|{\rm{Im}}\, z\right|)}{\sqrt{|z|}}\quad\text{for all $z$ with}\quad |z|>r,
\end{equation}
provided that $\,{\rm Re}\,z\ge 0.$ For $z$ with $\,|z|>r, \,{\rm Re}\,z<0,$ if we consider rotating clockwise through the angle
$\pi$ and recall the relation $\,J_\nu(z) = e^{-\nu\pi i} J_\nu(e^{\pi i} z),$ it is easy to see that the estimate \eqref{HL1}
continues to be valid. Since the function $\,z\mapsto J_\nu(z)\,\sqrt{ |z|}\,\exp\left(-|{\rm{Im}}\, z\right|)\,$ is continuous
 for $\,z\ne 0,$ it must be bounded in the annulus $\,\{z: 1\le |z|\le r\}\,$ and hence
\begin{equation}\label{HL2}
|J_\nu(z)| \le d_\nu\, \frac{ \exp\left(|{\rm{Im}}\, z\right|)}{\sqrt{|z|}}\quad\text{for all $z$ with}\quad  |z|\ge 1
\end{equation}
for some positive constant $d_\nu$ which depends only on $\nu$.

Let $\,\nu\ge -1/2.$ By applying the known inequality  \cite[\S 3.31, (1)]{Wa}
$$\left|J_\nu(z)\right| \le \frac{\,|z|^\nu \exp(|{\rm{Im}}\,z|)}{2^\nu \Gamma(\nu+1)}\,$$
and noting the fact that $\,|z|^{\nu+1/2}\le 1\,$ when $\,|z|\le 1,$ we deduce
\begin{equation}\label{HL3}
|J_\nu(z)| \le \frac{1}{\,2^\nu \Gamma(\nu+1)} \frac{ \exp\left(|{\rm{Im}}\, z\right|)}{\sqrt{|z|}}\quad\text{for all $z$ with}\quad  |z|\le 1.
\end{equation}
On combining \eqref{HL2}, \eqref{HL3}, the stated estimate follows with
$$c_\nu = \max \left(d_\nu,\, \frac{1}{\,2^\nu \Gamma(\nu+1)} \right).$$

In the case $\,-1<\nu<-1/2,$ we use \cite[\S 3.31, (2)]{Wa} to estimate
\begin{align}\label{HL4}
\left|J_\nu(z)\right| &\le \frac{\,|z|^\nu\exp(|{\rm{Im}}\,z|)}{2^\nu \Gamma(\nu+1)}
\left[1+ \frac{|z|^2}{4(\nu+1)(\nu+2)}\right]\nonumber\\
&\le \frac{\,(2\nu+3)^2\,}{2^{\nu+2}\Gamma(\nu+3)}\frac{\,|z|^{\nu+1/2}\exp(|{\rm{Im}}\,z|)}{\sqrt{|z|}}
\end{align}
for $\,|z|\le 1\,$ and the desired estimate follows on combining \eqref{HL2}, \eqref{HL4} with
$$c_\nu = \max \left(d_\nu,\,  \frac{\,(2\nu+3)^2\,}{2^{\nu+2}\Gamma(\nu+3)}\right).$$
\end{proof}

In dealing with the convergence matter, we shall need uniform lower bounds
for the values of derivatives of Bessel functions at positive zeros.

\begin{lemma}\label{lemmaP2}
For real $\,\mu>-1,$ the following estimates hold true.
\begin{align*}
\left|\sqrt{j_{\mu, m}} \,J_{\mu+1}\left(j_{\mu, m}\right)\right| \ge \left\{\begin{aligned}
&{\sqrt{\frac{2}{\pi}\,}\quad\text{if $\,|\mu|\le 1/2,\,\, m = 1, 2, \cdots,$}}\\
&{\frac{1}{\mu\sqrt{2\pi}}\quad\text{if $\,\mu>1/2,\,\,m = 1, 2, \cdots,$}}\\
&{\sqrt{\frac{15}{8\pi}\,}\quad\text{if $\,\mu<-1/2,\,\,m = 2, 3, \cdots.$}}
\end{aligned}\right.
\end{align*}
\end{lemma}

\begin{proof}
On making use of the known expressions \cite[\S 3.4]{Wa}
\begin{align}\label{SJ}
\begin{split}
J_{-1/2}(z) &= \sqrt{\frac{2}{\pi z}\,} \cos z,\quad J_{1/2}(z) = \sqrt{\frac{2}{\pi z}\,} \sin z,\\
J_{3/2}(z) &=  \sqrt{\frac{2}{\pi z}\,}\left(\frac{\sin z}{z} -\cos z\right),
\end{split}
\end{align}
the estimates in the cases $\,|\mu| =1/2\,$ are trivially verified and hence it suffices to
prove the estimates in the remaining cases.

The function $\,u(x) \equiv \sqrt{x}\,J_\mu(x)\,$ solves the differential equation
\begin{equation}\label{SL1}
u'' + \phi(x) u =0,\quad x>0,\quad\text{where $\,\,
\phi(x) = 1+\frac{1/4-\mu^2}{x^2}\,.$}
\end{equation}
We note that $\phi(x)$ decreases to the value $1$ when $\,|\mu|<1/2\,$ and increases
to the value $1$ when $\,|\mu|>1/2\,$ with a unique zero at $\,x=\sqrt{\mu^2-1/4\,}.$

\smallskip

(i) Assuming $\,|\mu|<1/2,$ we consider the auxiliary function
$$g(x) = \phi(x)[u(x)]^2 + [u'(x)]^2,\quad x>0$$
for which $\,g'(x) = \phi'(x)[u(x)]^2\,$
due to the differential equation \eqref{SL1}.

According to Hankel's asymptotic formula \cite[\S 7.21]{Wa}, we have
\begin{align*}
u(x) &= \sqrt{\frac{2}{\pi}}\,\cos \chi_\mu + O\left(x^{-1}\right),\\
u'(x) &= \frac{1}{2\sqrt x} J_\mu(x) + \sqrt x\,J_\mu'(x)\\
&=  -\sqrt{\frac{2}{\pi}}\,\sin\chi_\mu + O\left(x^{-1}\right)
\end{align*}
as $\,x\to\infty,$ where $\,\chi_\mu = x-(\mu/2 +1/4)\pi.$ On combining with the trivial behavior
$\,\phi(x) = 1 + O\left(x^{-2}\right),$ we find that
$$ g(x) = \frac{2}{\pi} + O\left(x^{-1}\right)\quad\text{as $\,\, x\to\infty.$}$$
It is thus found that $g(x)$ decreases to the value $\,2/\pi\,$ and hence we deduce that
$\,g(j_{\mu, 1})>g(j_{\mu, 2})>\cdots \,\to\, 2/\pi.$ Since
$$g(j_{\mu, m}) = \big[u'(j_{\mu, m})\big]^2 = j_{\mu, m} \big[J_{\mu+1}(j_{\mu, m})\big]^2,$$
we obtain the uniform lower bound
\begin{equation*}
\sqrt{ j_{\mu, m}}\,\big|J_{\mu+1}(j_{\mu, m})\big| \ge \sqrt{\frac{2}{\pi}\,},\quad m=1, 2, \cdots.
 \end{equation*}

(ii) In the case $\,|\mu|>1/2,$ we consider the auxiliary function
$$h(x) = [u(x)]^2 + \frac{1}{\phi(x)} [u'(x)]^2,\quad x>\sqrt{\mu^2 - 1/4\,}.$$
As readily calculated with the aid of \eqref{SL1}, we have
$$h'(x) = -\frac{\phi'(x)}{[\phi(x)]^2} [u'(x)]^2, \,\,\frac{1}{\phi(x)} = 1 + O\left(x^{-2}\right)\quad\text{as $\,x\to\infty,$}$$
and thus, by the same reasoning as above, $h(x)$ decreases to the value $2/\pi$.
Evaluating at $\,x= j_{\mu, m},$ we deduce the lower bound
$$\sqrt{ j_{\mu, m}}\,\big|J_{\mu+1}(j_{\mu, m})\big| \ge \sqrt{\frac{2}{\pi}\,}\sqrt{\phi(j_{\mu, m})\,,}$$
provided that $\,j_{\mu, m} >\sqrt{\mu^2 -1/4}.$
\begin{itemize}
\item[(a)]
In view of Lorch's estimate for the first positive zero (\cite{Lo})
$$ j_{\mu, 1}^2 > (\mu+1)(\mu+5),\quad \mu>-1,$$
we observe that $\,j_{\mu, 1} >\mu\,$ when$\,\mu>1/2.$ Since $\phi(x)$ is increasing and
$\,\phi(x)>1/(4\mu^2)\,$ for all $\,x>\mu,$ we find that if $\,\mu>1/2,$ then
\begin{equation*}
\sqrt{ j_{\mu, m}}\,\big|J_{\mu+1}(j_{\mu, m})\big| \ge \frac{1}{\mu \sqrt{2\pi}}, \quad m=1, 2, \cdots.
\end{equation*}
\item[(b)]
In the case $\,\mu<-1/2,$ while it is known that $\,j_{\mu, 1} \to 0\,$ as $\,\mu\to -1,$ we have
$\,j_{\mu, 2} \ge j_{1, 1} \,$ (see \cite{CC1}). Since $\,j_{1, 1}>\sqrt{12}\,$ according to the above Lorch's estimate
and $\,\phi(\sqrt{12}) > 15/16,$ we find that
\begin{equation*}
\sqrt{ j_{\mu, m}}\,\big|J_{\mu+1}(j_{\mu, m})\big| \ge \sqrt{\frac{15}{8\pi}}, \quad m=2, 3, \cdots.
\end{equation*}
\end{itemize}

On collecting the above case estimates, we complete the proof.
\end{proof}

\subsection{Proof of Theorem \ref{theoremP1}}
Our proof of Theorem \ref{theoremP1}
will be based on Cauchy's residue theorem and preceding inequalities of Bessel functions.
For the sake of convenience, we shall divide our proof into four different stages.

\medskip
{\bf 1.} We fix a point $\,z\in \mathcal{D}_\mu\,$ and define
\begin{align*}
\psi(w) &= \frac{\mathcal{H}_\nu(f)\rb{w}}{\,w^{\lambda}\rb{w-z}J_\mu (w)\,} = \int_0^1 f(t)\Psi(w, t)\, dt,\\
&\text{where}\quad \Psi(w, t) =\frac{J_\nu(wt)\sqrt{wt}}{\,w^\lambda \rb{w-z} J_\mu(w)\,}.
\end{align*}

In view of the limiting behavior
\begin{equation}\label{Pf1}
\Psi(w, t) \,\sim\, -\frac{\Gamma(\mu+1)}{\,2^{\nu-\mu}\,\Gamma(\nu+1)\,z}
\frac{t^{\nu+1/2}}{w}\quad\text{as}\quad w\to 0
\end{equation}
and the fact that the zeros of $J_\mu(w)$ are all simple, it is
evident that the function $\psi(w)$ is meromorphic with simple poles at
$\,z,\,0,\, \pm j_{\mu,1},\, \pm j_{\mu,2},\,\cdots.\,$
By making use of \eqref{Pf1}, it is trivial to calculate the residues
\begin{align*}
 \mathrm{Res}\qb{\psi\rb{w};w=z}  = \frac{\mathcal{H}_\nu(f)\rb{z}}{z^{\lambda}J_\mu (z)},\quad
 \mathrm{Res}\qb{\psi\rb{w};w=0} = -\frac{b}{z},
 \end{align*}
where it is assumed with no loss of generality that $\,b\ne 0.$
By using the relation
$\,J_\mu'\left(j_{\mu, m}\right) = - J_{\mu+1}\left(j_{\mu, m}\right),$
it is also easy to calculate
\begin{align*}
\mathrm{Res}\qb{\psi\rb{w};w= j_{\mu,m}} = \frac{a_m}{z - j_{\mu, m}}.
\end{align*}

We note that if $j$ denotes any positive zero of
$J_\mu(w)$, then
\begin{align}\label{Pf2}
\lim_{w\to -j} (w+j)\Psi(w, t) &= \frac{\,J_\nu(-jt)\sqrt{-jt}}{(-j)^\lambda (j+z) J_{\mu+1}(-j)\,}\nonumber\\
&=\frac{\, e^{(\nu -\lambda -\mu -1/2)\pi i}\,J_\nu(jt)\sqrt{jt}\,}{j^\lambda (j+z) J_{\mu+1}(j)}\nonumber\\
& = \frac{\,J_\nu(jt)\sqrt{jt}\,}{j^\lambda (j+z) J_{\mu+1}(j)},
\end{align}
where we have used the relation $\,J_\mu(-w) = e^{\mu\pi i} J_\mu(w)\,$ (\cite[\S 3.62]{Wa}), whence
\begin{align*}
\mathrm{Res}\qb{\psi\rb{w};w=- j_{\mu,m}} = \frac{a_m}{z +j_{\mu, m}}.
\end{align*}

For each positive integer $n$, let $R_n$ be the rectangle
with vertices at $\,\pm X_n \pm Yi,$ where $\,X_n = \left(n+ \mu/2+ 1/4\right)\pi\,$
and $\,Y>2|z|.$ Due to McMahon's asymptotic formula \cite{Mc} which states
\begin{equation}\label{ZA}
j_{\mu, n} = \left(n+ \frac \mu 2 - \frac 14\right)\pi + O\big(n^{-1}\big)\quad \text{as}\quad n\to \infty,
\end{equation}
there exists an integer $n_0$ such that $\,2|z|<j_{\mu, n}<X_n <j_{\mu, n+1}\,$ when $\,n\ge n_0.$ By the residue theorem,
hence, if $\,n\ge n_0,$ then
\begin{align}\label{Pf3}
&\frac{1}{2\pi i}\int_{R_n} \psi(w) dw\nonumber\\
&\quad =  \frac{\mathcal{H}_\nu(f)(z)}{z^{\lambda}J_\mu (z)} - \frac{b}{z}
+\sum_{m=1}^n a_m \rb{ \frac{1}{z-j_{\mu,m}} + \frac{1}{z+j_{\mu,m}}}.
\end{align}

\medskip

{\bf 2.} We shall now prove that the integrals of $\psi(w)$ along the upper and lower sides of $R_n$
tend to zero as $\,Y\to \infty\,$ for each fixed $\,n\ge n_0$.

\begin{lemma}\label{lemmaP3}
For real $\,\nu>-1, \,\mu>-1,$ there exists a positive constant $A$, depending only on $\mu, \nu$, such that
\begin{align}\label{BE3}
\left|\frac{\,J_\nu(wt) \sqrt{wt}\,}{J_\mu(w)}\right|  \le A\,|w|^{1/2}\cdot\left\{\begin{aligned}
&{\quad 1\qquad \,\,\,\big(\nu\ge - 1/2\big),}\\
&{t^{\nu+1/2}\,\,\quad\,\big(\nu<-1/2\big)} \end{aligned}\right.
\end{align}
for all $w$ with $\,\left|{\rm{Im}}\, w\right|\ge 1\,$ and for all $\,0<t<1.$
\end{lemma}

\begin{proof}
By making use of the inequality
$$ \left|\cos \Big(z- \frac{\mu\pi}{2} -\frac{\pi}{4}\Big)\right| \ge \frac 14 \exp(|{\rm Im\,} w|),$$
valid when $\,|{\rm Im\,} w|\ge 1,$ if we apply Hankel's asymptotic formula \eqref{ZH} in the same manner as before,
 then we can find $\,r>1\,$ such that
\begin{equation*}
\left|J_\mu(w)\right| \ge \frac 16  \frac{\exp(|{\rm Im\,} w|)}{\sqrt{|w|}}\quad\text{if}\,\,\,  |w|>r,\,|{\rm Im\,} w|\ge 1.
\end{equation*}
Since the function $\,w\mapsto \sqrt{|w|}\exp(-|{\rm Im\,} w|) J_\nu(w)\,$ is continuous and zero free, away from the
real axis, its modulus must have a positive minimum on $\,\left\{w: |w|\le r, \, |{\rm Im\,} w| \ge 1\right\}.$
Thus there exists a constant $\,d_\nu>0\,$ such that
\begin{equation}\label{EL1}
\left|J_\mu(w)\right| \ge d_\nu  \frac{\exp(|{\rm Im\,} w|)}{\sqrt{|w|}}\quad\text{if}\,\,\, |{\rm Im\,} w|\ge 1.
\end{equation}

If $\,\nu\ge -1/2,$ then Lemma \ref{lemmaP1} shows that
$$\left| J_\nu(wt) \sqrt{wt}\right|\le c_\nu  \exp(t|{\rm Im\,} w|)\le  c_\nu  \exp(|{\rm Im\,} w|)$$
for all $\,w\ne 0\,$ and for all $\,0<t<1\,$ and hence \eqref{BE3} follows at once by \eqref{EL1}.
If $\,-1<\nu<-1/2,$ then Lemma \ref{lemmaP1} gives
\begin{align*}
\left| J_\nu(wt) \sqrt{wt}\right| &\le c_\nu  \exp(t|{\rm Im\,}w|)\left[ 1+ (|w|t)^{\nu + 1/2}\right] \\
&\le c_\nu' \exp(|{\rm Im\,} w|)\, t^{\nu+1/2}
\end{align*}
for all $w$ with$\,|w|\ge 1\,$ and for all $\,0<t<1,$ where $c_\nu'$ denotes another constant, and hence
\eqref{BE3} follows immediately in this case, too.
\end{proof}

Let us denote by $U_{n, Y}$ the upper side of rectangle $R_n$ and assume first that
$\,\nu\ge -1/2.$ For $\,w\in U_{n, Y},$ we have $\,|w|\ge Y>2|z|\,$
and if we further assume $\,Y>1,$ then we may apply the estimate \eqref{BE3} to obtain
$$|\Psi(w, t)| \le \frac{A|w|^{1/2}}{|w|^\lambda (|w|-|z|)} \le \frac{2A}{Y^{\nu-\mu+2}},$$
where we have used the condition $\,\nu-\mu+2>0.$ As a consequence,
\begin{align*}
\left|\int_{U_{n, Y}} \psi(w) dw\right| &\le \int_0^1\int_{U_{n, Y}} |f(t)|\left|\Psi(w, t)\right|  |dw|dt\\
&\le \frac{4A X_n}{\,Y^{\nu-\mu+2\,}}\int_0^1 |f(t)|dt,
\end{align*}
which shows in effect that if $n$ is fixed, then
$$\lim_{Y\to\infty} \left|\int_{U_{n, Y}} \psi(w) dw\right| =0.$$

In the case $\,-1<\nu<-1/2,$ \eqref{BE3} gives the alternative bound
\begin{align*}
\left|\int_{U_{n, Y}} \psi(w) dw\right| \le \frac{4A X_n}{\,Y^{\nu-\mu+2}\,}\int_0^1 t^{\nu+1/2} |f(t)|dt,
\end{align*}
whence the same conclusion remains valid. By dealing with the integral
along the lower side of $R_n$ in a similar way, we conclude that
\begin{align}\label{Pf4}
&\frac{1}{2\pi i}\left(\int_{ X_n -\infty i}^{X_n + \infty i} -
\int_{-X_n -\infty i}^{-X_n +\infty i}\right)\psi(w) dw \nonumber\\
&\qquad=  \frac{\mathcal{H}_\nu(f)\rb{z}}{z^{\lambda}J_\mu (z)} - \frac{b}{z}
+\sum_{m=1}^n a_m \rb{ \frac{1}{z-j_{\mu,m}} + \frac{1}{z+j_{\mu,m}}}.
\end{align}

\medskip

{\bf 3.} We next consider taking limits $\,n\to\infty\,$ on both sides of \eqref{Pf4}.
On transforming $\,w\to e^{\pi i}w,$ the same reasonings used to derive \eqref{Pf2} gives
$$\int_{-X_n -\infty i}^{-X_n +\infty i}\Psi(w, t) dw = \int_{X_n -\infty i}^{X_n +\infty i}
\frac{J_\nu(wt) \sqrt{wt}}{\,w^\lambda (w+z) J_\mu(w)\,}dw$$
and hence we obtain
\begin{align}\label{Pf5}
&\frac{1}{2\pi i}\left(\int_{ X_n -\infty i}^{X_n + \infty i} -\int_{-X_n -\infty i}^{-X_n +\infty i}\right)\psi(w) dw\nonumber\\
&\qquad= \frac{z}{\pi i}\int_0^1\int_{ X_n -\infty i}^{X_n + \infty i} f(t) \left[ \frac{J_\nu(wt) \sqrt{wt}}{\,w^\lambda (w^2 - z^2) J_\mu(w)\,}\right] dw dt.
\end{align}

As an alternative of \eqref{BE3}, we claim that there exist an integer $\,n_1\ge n_0\,$
and a constant $\,B>0,$ depending on $\mu, \nu$ but independent of $n$, such that
\begin{align}\label{BE4}
\left|\frac{\,J_\nu(wt) \sqrt{wt}\,}{J_\mu(w)}\right|  \le B\,|w|^{1/2}\cdot\left\{\begin{aligned}
&{\quad 1\qquad \,\,\,\big(\nu\ge - 1/2\big),}\\
&{t^{\nu+1/2}\,\,\quad\,\big(\nu<-1/2\big)} \end{aligned}\right.
\end{align}
for all $w$ with $\,{\rm Re}\,w = X_n,\,\,n\ge n_1\,$ and for all $\,0<t<1.$

Indeed, since $X_n$ was chosen so as to
\begin{align*}
\left|\cos\left(w-\frac{\mu\pi}{2}-\frac{\pi}{4}\right)\right| = \cosh\,({\rm Im}\,w ),
\end{align*}
if we apply Hankel's asymptotic formula \eqref{ZH} to choose $\,n_1\ge n_0\,$ so that
\begin{equation*}
\left|J_\mu(w)\right| \ge \frac 16  \frac{\cosh\,({\rm Im\,} w)}{\sqrt{|w|}}
\end{equation*}
for all $w$ with $\,{\rm Re}\,w = X_n,\,\,n\ge n_1,$ and Lemma \ref{lemmaP1},
\eqref{BE4} follows along the same scheme as employed in the proof of Lemma \ref{lemmaP3}.

Let $\,\nu\ge -1/2.$ As easily verified by \eqref{BE4}
and the assumption $\,X_n>2|z|,$ if $\,n\ge n_1,$ then
the modulus of the right side of \eqref{Pf5} does not exceed
\begin{align}\label{Pf6}
&\frac{4B}{3\pi} \int_0^1|f(t)| dt \int_{X_n -\infty i}^{X_n + \infty i} |w|^{-(\nu-\mu+3)} |dw|\nonumber\\
&\qquad = \frac{8B}{3\pi} \int_0^1|f(t)| dt \int_0^\infty \frac{ds}{\,|X_n + is|^{\nu-\mu+3}\,}.
\end{align}

By changing variables $\,s\to X_n \tan\theta,$ it is simple to evaluate
\begin{align*}
\int_0^\infty \frac{ds}{\,|X_n + is|^{\nu-\mu+3}\,} &= \frac{1}{X_n^{\nu-\mu+2}}
\int_0^{\pi/2} \cos^{\nu-\mu+1}\theta\,d\theta\\
&= \frac{\Gamma\left(\frac{\nu-\mu+2}{2}\right)\Gamma(1/2)}{2\, \Gamma\left(\frac{\nu-\mu+3}{2}\right)}
\frac{1}{X_n^{\nu-\mu+2}}\,.
\end{align*}
Consequently, the right side of \eqref{Pf6} is bounded by
$$ \frac{\,C\,}{X_n^{\nu-\mu+2}}\int_0^1 |f(t)|dt$$
for some constant $\,C = C(\mu, \nu)>0\,$ and we may conclude that
$$
\lim_{n\to\infty}\left|\frac{1}{2\pi i}\left(\int_{ X_n -\infty i}^{X_n + \infty i} -\int_{-X_n -\infty i}^{-X_n +\infty i}\right)\psi(w) dw\right| =0.$$
Passing to the limit in the identity \eqref{Pf4}, we obtain the desired partial
fraction expansion formula, provided that the series \eqref{Paf1} converges.

In the case $\,-1<\nu<-1/2,$ an obvious modification gives the bound 
$$ \frac{\,C \,}{X_n^{\nu-\mu+2}}\int_0^1 t^{\nu+1/2} |f(t)|dt$$
for the right side of \eqref{Pf6}, which leads to the same conclusion.

\medskip

{\bf 4.} We shall now prove that the series of \eqref{Paf1} converges
absolutely for each fixed $\,z\in\mathcal{D}_\mu\,$ and the convergence is uniform
on each fixed compact subset of $\mathcal{D_\mu}$, which will complete the proof.

For $\,\nu\ge -1/2,$ Lemma \ref{lemmaP1} gives the uniform boundedness
\begin{align}\label{UB}
\left| \mathcal{H}_\nu(f) (j_{\mu,m})\right|
&\le \int_0^1 |f(t)| \left| J_\nu(j_{\mu, m} t) \sqrt{j_{\mu, m} t}\right| dt\nonumber\\
&\le c_\nu \int_0^1 |f(t)|dt, \quad m=1,2, \cdots.
\end{align}
Concerning the coefficient of \eqref{Paf1}, it is thus evident by Lemma \ref{lemmaP2} that there exists
a positive constant $c_{\mu, \nu}$, independent of $m$, such that
\begin{align}\label{UBS1}
    \left|a_m\right|  &= \left|\frac{ \mathcal{H}_\nu (f)\rb{j_{\mu,m}} }{\,j_{\mu,m}^{\nu-\mu+3/2}
 J_{\mu+1}\rb{j_{\mu,m}}}\right|\nonumber\\
    &\le c_{\mu, \nu}\,j_{\mu, m}^{\,-(\nu-\mu+1)}  \int_0^1\vb{f(t)}dt,\quad m=1, 2, \cdots.
\end{align}

By the interlacing of zeros of $\,J_\mu(z), J_{\mu+2}(z)\,$ and the monotonicity of
$j_{\mu, m}$ with respect to $\,\mu>-1,$ for each fixed $m$, we observe that
\begin{equation}\label{UBS2}
 j_{\mu, m} > j_{\mu+2, m-1}>j_{1/2, m-1} = (m-1)\pi,\quad m=2, 3, \cdots.
 \end{equation}

 \begin{itemize}
 \item[(i)] We fix a point $\,z_0\in\mathcal{D}_\mu\,$ and choose an integer $\,m_0\ge 2\,$ such that
 $\,(m_0-1)\pi>2 |z_0|.$ If $\,m\ge m_0,$ then \eqref{UBS2} indicates that $\,j_{\mu, m}>2|z_0|\,$ and thus
 $\,\left| z_0^2 - j^2_{\mu,m}\right|> 3j^2_{\mu,m}/4,$ which in turn implies
  $$\left|\frac{1}{z_0-j_{\mu,m}} + \frac{1}{z_0 +j_{\mu,m}}\right|=
\left|\frac{2z_0}{z_0^2-j^2_{\mu,m}}\right| \le \frac{8|z_0|}{\,3 j_{\mu, m}^2\,}.$$
It follows from \eqref{UBS1} and \eqref{UBS2} that
\begin{align}\label{LE1}
    &\sum_{m= m_0}^\infty \left|a_m \left(\frac{1}{z_0 -j_{\mu,m}} + \frac{1}{z_0 +j_{\mu,m}}\right)\right| \nonumber\\
    &\quad \le \frac{8|z_0| c_{\mu, \nu}}{3} \int_0^1\vb{f(t)}dt\sum_{m=m_0}^\infty\,\frac{1}{j_{\mu,m}^{\nu-\mu+3}}\nonumber\\
     &\quad \le \frac{8|z_0| c_{\mu, \nu}}{3} \int_0^1\vb{f(t)}dt\sum_{m=m_0}^\infty\,\frac{1}{[(m-1)\pi]^{\nu-\mu+3}},
   \end{align}
which shows in effect, due to the condition $\,\nu-\mu+3>1,$ that the series defined in \eqref{Paf1}
converges absolutely at $z_0$.

\item[(ii)] Let us fix a compact set $\,\mathcal{E}\subset \mathcal{D}_\mu.$ Setting $\,\rho= \sup\left\{|z| : z\in\mathcal{E}\right\},$
we choose an integer $\,\ell\ge 2\,$ with $\,(\ell-1)\pi>2\rho.$ If $\,m>\ell\,$ and $\,z\in\mathcal{E},$ then
\eqref{UBS2} implies that $\,|z|\le \rho <j_{\mu, m}/2\,$ and so
$$\left|\frac{1}{z-j_{\mu,m}} + \frac{1}{z+j_{\mu,m}}\right|=
\left|\frac{2z}{z^2-j^2_{\mu,m}}\right| \le \frac{8\rho}{\,3 j_{\mu, m}^2\,}.$$

Let $\mathcal{E}^c$ denote the complement of $\mathcal{E}$. Since $\mathcal{E}^c$ is an open set containing $\,\pm j_{\mu, m}\,$ for all $m$,
we can find a constant $\,\delta>0\,$ so that
$$\,\Big\{ w\in\mathbb{C} : \left |w\pm j_{\mu, m}\right|<\delta\Big\}\subset \mathcal{E}^c,\quad 1\le m\le  \ell,$$
which implies at once 
$$\left|\frac{1}{z-j_{\mu,m}} + \frac{1}{z+j_{\mu,m}}\right| \le \frac{2}{\delta}\quad\text{for all}\,\,\, z\in\mathcal{E},\, \,1\le m\le \ell.$$

By combining with \eqref{UBS1}, we deduce the uniform bounds
\begin{align}\label{UBS3}
&\sup_{z\in\mathcal{E}}\left|a_m \left(\frac{1}{z-j_{\mu,m}} + \frac{1}{z+j_{\mu,m}}\right)\right| \le  A_m c_{\mu, \nu} \int_0^1\vb{f(t)}dt,\\
&\quad\text{where}\quad A_m =\left\{\begin{aligned}
&{\,\,\frac{2}{\,\delta j_{\mu, m}^{\nu-\mu+1}\,}\quad\text{for}\quad 1\le m\le \ell,}\\
&{\,\,\,\frac{8\rho}{\,3 j_{\mu, m}^{\nu-\mu+3}\,}\quad\text{for}\quad  m\ge\ell +1.}
\end{aligned}\right.\nonumber
\end{align}

Due to the condition $\,\nu-\mu+3>1,$ it is evident by \eqref{UBS2} that 
\begin{align*}
    \sum_{m=1}^\infty A_m &= \sum_{m=1}^\ell \frac{2}{\,\delta  j_{\mu, m}^{\nu-\mu+1}\,}
    + \sum_{m=\ell+1}^\infty \frac{8\rho}{\,3 j_{\mu, m}^{\nu-\mu+3}\,}\nonumber\\
    &\le \frac{2}{\delta} \sum_{m=1}^\ell \frac{1}{\, j_{\mu, m}^{\nu-\mu+1}\,}
    + \frac{8\rho}{3}\sum_{m=\ell+1}^\infty \frac{1}{[(m-1)\pi]^{\nu-\mu+3}} \nonumber\\
    &<\infty.
   \end{align*}
Therefore we may conclude that
the series defined in \eqref{Paf1} converges uniformly on $\mathcal{E}$ by the Weierstrass M-test.
\end{itemize}

In the case $\,-1<\nu<-1/2,$  Lemma \ref{lemmaP1} gives the alternative
\begin{equation}\label{UB2}
\left| \mathcal{H}_\nu(f) (j_{\mu,m})\right|
\le c_\nu \int_0^1 t^{\nu+1/2} |f(t)|dt, \quad m=1,2, \cdots,
\end{equation}
in place of  \eqref{UB}, where $c_\nu$ may be a different constant. Proceeding along the same lines as above
and adjusting multiplicative constants in an obvious way, it is not difficult to confirm that estimates \eqref{LE1}, \eqref{UBS3}
remain valid with $\int_0^1 |f(t)|dt$ replaced by
$\int_0^1 t^{\nu+1/2} |f(t)|dt$. We conclude that the series defined in \eqref{Paf1} converges absolutely at
$\,z_0\in \mathcal{D}_\mu\,$ and uniformly on $\mathcal{E}$.

Theorem \ref{theoremP1} is now completely proved.\qed

\subsection{Equivalent forms}
By using \eqref{H3}, it is immediate to express the partial fraction expansion formula
\eqref{Paf1} in terms of the Hankel transform defined by \eqref{H2}.

\smallskip

\begin{corollary} Under the same assumptions of Theorem \ref{theoremP1}, we have
\begin{align}\label{Paf3}
 \frac{\,\bH_\nu(f)\rb{z}\,}{z \bJ_\mu (z)}  &=\frac{\,\bH_\nu(f)(0)\,}{z} -2(\mu+1)
 \sum_{m=1}^\infty \frac{\bH_\nu(f)\rb{j_{\mu, m}}}{\,j_{\mu, m}^2\bJ_{\mu+1}\rb{j_{\mu, m}}\,}\nonumber\\
 &\qquad\times\,\,\rb{ \frac{1}{z-j_{\mu,m}} + \frac{1}{z+j_{\mu,m}}}\qquad(z\in \mathcal{D}_\mu).
 \end{align}
\end{corollary}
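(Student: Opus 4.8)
The plan is to transfer the partial fraction expansion \eqref{Paf1} through the normalization relations \eqref{H3}, specialized to the cases at hand. First I would record the instances of \eqref{H3} that are needed here: with $\lambda = \nu-\mu+3/2$ we have $J_\mu(z) = \frac{(z/2)^\mu}{\Gamma(\mu+1)}\bJ_\mu(z)$ and $\mathcal{H}_\nu(f)(z) = \frac{\sqrt 2\,(z/2)^{\nu+1/2}}{\Gamma(\nu+1)}\bH_\nu(f)(z)$. Dividing these, the factor $z^{\nu-\mu+3/2}$ in the denominator of the left-hand side of \eqref{Paf1} combines with the powers of $z$ coming from \eqref{H3} in such a way that the ratio $\mathcal{H}_\nu(f)(z)\big/\big(z^{\nu-\mu+3/2}J_\mu(z)\big)$ becomes a purely numerical constant times $\bH_\nu(f)(z)\big/\big(z\,\bJ_\mu(z)\big)$; an elementary bookkeeping of exponents ($\nu+1/2$ from the numerator, minus $\mu$ from $J_\mu$, minus $\nu-\mu+3/2$ from the explicit power, giving $-1$) confirms the single surviving $z^{-1}$. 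The prefactor works out to $\frac{\sqrt 2\,\Gamma(\mu+1)}{2^{\nu-\mu+1/2}\,\Gamma(\nu+1)} = \frac{\Gamma(\mu+1)}{2^{\nu-\mu}\,\Gamma(\nu+1)}$, which is exactly the constant $b\big/\!\int_0^1 t^{\nu+1/2}f(t)\,dt$ appearing in Theorem \ref{theoremP1}.

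Next I would rewrite each residue accordingly. For the leading term, $b = \frac{\Gamma(\mu+1)}{2^{\nu-\mu}\Gamma(\nu+1)}\int_0^1 t^{\nu+1/2}f(t)\,dt$; since $\bH_\nu(f)(0) = \int_0^1 t^{\nu+1/2}f(t)\,\bJ_\nu(0)\,dt = \int_0^1 t^{\nu+1/2}f(t)\,dt$ (using $\bJ_\nu(0)=1$), multiplying \eqref{Paf1} through by the reciprocal prefactor turns $b$ into $\bH_\nu(f)(0)$, as claimed. For the summands, I would substitute $a_m = \mathcal{H}_\nu(f)(j_{\mu,m})\big/\big(j_{\mu,m}^{\nu-\mu+3/2}J_{\mu+1}(j_{\mu,m})\big)$ and apply \eqref{H3} at $z = j_{\mu,m}$ to both $\mathcal{H}_\nu(f)$ and $J_{\mu+1}$: writing $J_{\mu+1}(j_{\mu,m}) = \frac{(j_{\mu,m}/2)^{\mu+1}}{\Gamma(\mu+2)}\bJ_{\mu+1}(j_{\mu,m})$ and $\mathcal{H}_\nu(f)(j_{\mu,m}) = \frac{\sqrt 2\,(j_{\mu,m}/2)^{\nu+1/2}}{\Gamma(\nu+1)}\bH_\nu(f)(j_{\mu,m})$, the powers of $j_{\mu,m}$ again collapse (exponent $\nu+1/2 - (\nu-\mu+3/2) - (\mu+1) = -2$), and the Gamma factors produce $\Gamma(\mu+2)/\Gamma(\mu+1) = \mu+1$. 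After dividing the whole expansion by the prefactor $\frac{\Gamma(\mu+1)}{2^{\nu-\mu}\Gamma(\nu+1)}$ and tracking the remaining power of $2$ (a clean factor of $2$ survives alongside the $\mu+1$), each coefficient becomes $-\,2(\mu+1)\,\bH_\nu(f)(j_{\mu,m})\big/\big(j_{\mu,m}^2\,\bJ_{\mu+1}(j_{\mu,m})\big)$, matching \eqref{Paf3} including the sign.

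Finally, the convergence assertion is inherited directly: since the map from \eqref{Paf1} to \eqref{Paf3} multiplies the entire identity by a fixed nonzero constant and replaces each term by a constant multiple of itself (the constants $\mathcal{H}_\nu(f)(j_{\mu,m})\big/a_m$ are $j_{\mu,m}$-dependent but enter only through the already-converted summands), the uniform convergence on compact subsets of $\mathcal{D}_\mu$ established in Theorem \ref{theoremP1} passes over verbatim; no new estimate is required. This is the one point to state carefully rather than a genuine obstacle — indeed there is no real obstacle here, the corollary being a pure change of normalization. The only mild care needed is to keep the exponent arithmetic and the powers of $2$ straight so that the constant $2(\mu+1)$ emerges exactly; I would present that computation in one or two displayed lines and then invoke Theorem \ref{theoremP1} for everything else. \qed
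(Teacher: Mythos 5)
Your proposal is correct and is exactly the paper's route: the corollary is obtained by substituting the normalization relations \eqref{H3} into \eqref{Paf1}, and your exponent and constant bookkeeping (the surviving $z^{-1}$, the factor $j_{\mu,m}^{-2}$, and $2\,\Gamma(\mu+2)/\Gamma(\mu+1)=2(\mu+1)$ with the sign preserved) reproduces \eqref{Paf3}, with uniform convergence inherited since the series is just a fixed nonzero constant multiple of the one in Theorem \ref{theoremP1}.
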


A remarkable consequence is that the image of Hankel transform of order $\nu$ can be recovered fully from its
sampled values at the positive zeros of Bessel function $J_\mu(z)$ of any order $\,-1<\mu<\nu+2.$
To be precise, if we multiply \eqref{Paf3} by $z$ and rewrite the coefficients with the aid of identity
\begin{equation}\label{W2}
\bJ_\mu'(z) = -\frac{z}{2(\mu+1)}\,\bJ_{\mu+1}(z),
\end{equation}
we obtain an $L^1$-version of \emph{sampling theorems} for Hankel transforms.

\begin{corollary} Let $\,\nu>-1,\,-1<\mu<\nu+2.$ For an integrable function $f(t)$ satisfying  \eqref{IA},
put $\,\phi(z) =\bH_\nu(f)(z).$ Then for all $\,z\in\mathbb{C},$
\begin{align*}\label{Sap1}
\phi(z) =\bJ_\mu(z) \biggl[ \phi(0) + 2z^2
\sum_{m=1}^\infty \frac{\phi\rb{j_{\mu, m}}}{\,j_{\mu, m}\,\bJ_{\mu}'\rb{j_{\mu, m}}\,}\,
\frac{1}{z^2-j_{\mu,m}^2}\biggr],
\end{align*}
where the series converges uniformly on each compact
subset of $\mathbb{C}$.
\end{corollary}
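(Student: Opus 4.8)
The plan is to read the formula off the partial fraction expansion \eqref{Paf3} of the preceding Corollary by clearing the denominator $z\bJ_\mu(z)$ and re-expressing the residues through the Bessel identity \eqref{W2}. Fixing $z\in\mathcal{D}_\mu$ and multiplying both sides of \eqref{Paf3} by $z\bJ_\mu(z)$, the elementary identity $\tfrac{1}{z-j_{\mu,m}}+\tfrac{1}{z+j_{\mu,m}}=\tfrac{2z}{z^2-j_{\mu,m}^2}$ yields
\begin{equation*}
\phi(z)=\bJ_\mu(z)\,\phi(0)-4(\mu+1)\,z^2\bJ_\mu(z)\sum_{m=1}^\infty\frac{\phi\rb{j_{\mu,m}}}{\,j_{\mu,m}^2\,\bJ_{\mu+1}\rb{j_{\mu,m}}\rb{z^2-j_{\mu,m}^2}\,}.
\end{equation*}
Since $\bJ_\mu\rb{j_{\mu,m}}=0$, evaluating \eqref{W2} at $z=j_{\mu,m}$ gives $\bJ_\mu'\rb{j_{\mu,m}}=-\tfrac{j_{\mu,m}}{2(\mu+1)}\bJ_{\mu+1}\rb{j_{\mu,m}}$, so that $-4(\mu+1)\big/\big(j_{\mu,m}^2\bJ_{\mu+1}(j_{\mu,m})\big)=2\big/\big(j_{\mu,m}\bJ_\mu'(j_{\mu,m})\big)$; substituting this into the last display produces exactly the asserted identity for every $z\in\mathcal{D}_\mu$.

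It remains to check that the right-hand side extends to an entire function and that the series there converges uniformly on compact sets, after which the identity, being valid on the dense open set $\mathcal{D}_\mu$, propagates to all of $\mathbb{C}$ by continuity (at $z=0$ it collapses to $\phi(0)=\bJ_\mu(0)\phi(0)$, true since $\bJ_\mu(0)=1$). Absorbing the factor $\bJ_\mu(z)$ into each summand, set $g_m(z)=\bJ_\mu(z)\big/(z^2-j_{\mu,m}^2)$. As $\bJ_\mu$ is even and its only zeros are the simple zeros $\pm j_{\mu,m}$, every $g_m$ is entire. On a disc $\cb{|z|\le K}$, for all $m$ with $j_{\mu,m}>2K$ one has $\vb{z^2-j_{\mu,m}^2}\ge j_{\mu,m}^2-K^2\ge\tfrac34 j_{\mu,m}^2$, hence $\vb{g_m(z)}\le\tfrac43 j_{\mu,m}^{-2}\max_{|w|\le K}\vb{\bJ_\mu(w)}$.

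The growth rates of the remaining factors are already contained in the lemmas. From the integral representation \eqref{H2}, the uniform bound \eqref{UB} when $\nu\ge-1/2$ (and its analogue for $-1<\nu<-1/2$ furnished by Lemma \ref{lemmaP1}(i)), the integrability hypothesis \eqref{IA} and the relation \eqref{H3}, one obtains $\vb{\phi(j_{\mu,m})}\le C\,j_{\mu,m}^{-\nu-1/2}$; and from $J_{\mu+1}(j_{\mu,m})=-J_\mu'(j_{\mu,m})$, Lemma \ref{lemmaP2} and \eqref{H3}, one obtains $\vb{\bJ_\mu'(j_{\mu,m})}\ge c\,j_{\mu,m}^{-\mu-1/2}$ for all large $m$. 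Combining these with the estimate for $g_m$, the general term of $2z^2\sum_m\tfrac{\phi(j_{\mu,m})}{j_{\mu,m}\bJ_\mu'(j_{\mu,m})}g_m(z)$ on $\cb{|z|\le K}$ is bounded by a constant multiple of $j_{\mu,m}^{\mu-\nu-3}$; since $\mu<\nu+2$ and $j_{\mu,m}\sim m\pi$ by \eqref{ZA}, the series $\sum_m j_{\mu,m}^{\mu-\nu-3}$ converges, so the Weierstrass $M$-test gives the claimed uniform convergence on every compact subset of $\mathbb{C}$ and the entirety of the limit. The one delicate point is precisely this passage from $\mathcal{D}_\mu$ to the whole plane: it hinges on observing that the simple zeros of $\bJ_\mu$ at $\pm j_{\mu,m}$ exactly cancel the poles of the individual summands, so that the series is, term by term, a series of entire functions.
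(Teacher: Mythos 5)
Your proposal is correct and follows essentially the same route as the paper: multiply the expansion \eqref{Paf3} by $z\,\bJ_\mu(z)$, rewrite the coefficients via \eqref{W2}, and pass from $\mathcal{D}_\mu$ to all of $\mathbb{C}$ by noting that the simple zeros of $\bJ_\mu$ make the singularities at $\pm j_{\mu,m}$ removable. Your explicit Weierstrass $M$-test bounds for the uniform convergence merely spell out estimates already implicit in the paper's proof of Theorem \ref{theoremP1} (via Lemmas \ref{lemmaP1} and \ref{lemmaP2}), so no substantive difference remains.
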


Since the singularities at $\pm j_{\mu, m}$ are easily seen to be removable, 
the expression on the right represents an entire function. In the special case $\,\mu=1/2,$
by using the known formula \eqref{SJ}, it is straightforward to find that
the above series representation reduces to
$$ \phi(z) = \sum_{m\in\mathbb{Z}} \phi(m\pi)\, \frac{\sin(z-m\pi)}{\,z-m\pi\,}\quad(z\in\mathbb{C},\,\nu>-1).$$

As an illustration, if we take
\begin{equation}\label{EX1}
f(t) = \frac{2}{B(\lambda-\nu, \nu+1)} \,(1-t^2)^{\lambda-\nu-1} t^{\nu+1/2},\quad 0<t<1,
\end{equation}
where $B$ denotes the beta function and $\,\lambda>\nu>-1,$ it is routine to compute
$\,\bH_\nu(f)(z) = \bJ_\lambda(z)\,$ and the above representations give
\begin{align*}
\bJ_\lambda(z) &=\bJ_\mu(z) \biggl[ 1 + 2z^2
\sum_{m=1}^\infty \frac{\bJ_\lambda\rb{j_{\mu, m}}}{\,j_{\mu, m}\,\bJ_{\mu}'\rb{j_{\mu, m}}\,}\,
\frac{1}{z^2-j_{\mu,m}^2}\biggr]\\
&= \sum_{m\in\mathbb{Z}} \bJ_\lambda(m\pi)\, \frac{\sin(z-m\pi)}{\,z-m\pi\,}\qquad(z\in\mathbb{C}),
\end{align*}
valid for $\,\lambda>-1,\,-1<\mu<\lambda+2.$ We remark that
the former extends Higgins \cite[(7)]{H1} and
the latter is equivalent to \cite[\S 19.4, (7)]{Wa}
obtained by Nagaoka \cite{Na} while studying the intensity of diffracted light
on a cylindrical surface. For $\,\lambda>0,$ it is simple to find that
$\bJ_\lambda(z)$ is band-limited and hence the latter is also obtainable from
Shannon's sampling theorem (see \cite{H1}, \cite{H2}, \cite{J} for more details and applications).

\section{Hurwitz-P\'olya theorems}
This section aims to revisit some of the results obtained by Hurwitz and P\'olya \cite{P} on Fourier transforms
in terms of Hankel transforms for the sake of completeness and subsequent applications.

Concerning Fourier transforms $U(z), V(z)$ as defined in \eqref{HP1},
if we apply the partial fraction expansion formula \eqref{Paf3} with $\,\mu = \pm 1/2,$
it is immediate to derive \eqref{HP2}, \eqref{HP3} and the additional expansions
\begin{equation}\label{HP4}
\frac{\,U(z)\,}{\,\sin z\,} = \frac{U(0)}{z} +
\sum_{m=1}^\infty (-1)^m U(m\pi)\left(\frac{1}{z-m\pi} + \frac{1}{z+ m\pi}\right),
\end{equation}
\begin{align}\label{HP5}
\frac{V(z)}{z\cos z} &= V'(0) + z\sum_{m=1}^\infty\frac{(-1)^m V\left[(m-1/2)\pi\right]}{[(m-1/2)\pi]^2}\nonumber\\
&\qquad\qquad\quad\times\left[\frac{1}{z-(m-1/2)\pi} + \frac{1}{z+ (m-1/2)\pi}\right].
\end{align}
valid under the assumption that $f(t)$ is integrable.

We remark that \eqref{HP4} coincides with \cite[(38)]{P}  but \eqref{HP5} is not listed in \cite{P}. Thus \eqref{Paf3} recovers
all expansions obtained by Hurwitz and P\'olya, with the additional formula \eqref{HP5},
as special cases. Since the choice of $\mu$ is free in the range $\,-1<\mu<\nu+2,$
more expansions are also available.

As discovered by Hurwitz and developed formally by P\'olya, it turns out that
these partial fraction expansions provide effective means of investigating
the existence and nature of zeros as well as their distributions.

\begin{theorem}\label{lemmaP}{\rm{(Hurwitz and P\'olya \cite{P})}}
Let $G(z)$ be an even entire function having zeros only at $\,\pm \alpha_m,\,0<\alpha_1<\alpha_2<\cdots.$
Suppose that $F(z)$ is an even entire function such that $\,\left|F(\alpha_m)\right|>0\,$
for all $m$ and
\begin{equation}\label{HP}
\frac{F(z)}{\,z G(z)\,} = \frac{b}{z} + \sum_{m=1}^\infty c_m\left(\frac{1}{z-\alpha_m} +\frac{1}{z+\alpha_m}\right)
\end{equation}
for all $\,z\in\mathbb{C}\setminus\left\{0, \,\pm \alpha_1, \,\pm \alpha_2, \cdots\right\},$
where $\,b\in\R\,$ and $\,c_m>0\,$ for each $m$.
\begin{itemize}
\item[\rm(i)] If $\,b>0,$ then $F(z)$ has an infinity of zeros which are all real and
it has exactly one zero in each of the intervals
$$\big(0,\,\alpha_1\big),\,\,\big(\alpha_m,\,\alpha_{m+1}\big),\quad m= 1, 2, \cdots,$$
and no positive zeros elsewhere.
\item[\rm(ii)] If $\,b<0\,$ and $F(z)$ is subject to the additional assumption
$\,|F(iy)|>0\,$ for all $\,y\in\mathbb{R},$ then $F(z)$ has an infinity of zeros which are all real and
it has exactly one zero in each of the intervals
$$\big(\alpha_m,\,\alpha_{m+1}\big),\quad m= 1, 2, \cdots,$$
and no positive zeros elsewhere.
\end{itemize}
\end{theorem}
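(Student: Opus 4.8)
The plan is to exploit the fact that the partial fraction expansion \eqref{HP} encodes, through its residues, precise information about the real zeros of $F$ between consecutive $\alpha_m$, and to supplement this with a contour-integration (or argument-principle) count showing that there are no other zeros. First I would introduce the meromorphic function $g(z) = F(z)/(zG(z))$, noting from \eqref{HP} that it has simple poles exactly at $0$ and $\pm\alpha_m$ with residues $b$ and $c_m>0$ respectively. Because $F$ and $G$ are even, $g$ is odd, so it suffices to analyze $z>0$. On each open interval $(\alpha_m,\alpha_{m+1})$ the function $g$ is real-analytic; as $z\to\alpha_m^+$ the dominant term $c_m/(z-\alpha_m)$ forces $g(z)\to+\infty$, and as $z\to\alpha_{m+1}^-$ the term $c_{m+1}/(z-\alpha_{m+1})$ forces $g(z)\to-\infty$. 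By the intermediate value theorem $g$ has at least one zero in $(\alpha_m,\alpha_{m+1})$, and since each zero of $g$ in this interval is a zero of $F$ (the points $\alpha_m$ are excluded and $G$ is nonvanishing there), $F$ has at least one zero in every $(\alpha_m,\alpha_{m+1})$. For case (i) the same sign analysis on $(0,\alpha_1)$: as $z\to 0^+$, $g(z)\sim b/z\to+\infty$ when $b>0$, and as $z\to\alpha_1^-$, $g(z)\to-\infty$, so $F$ has at least one zero in $(0,\alpha_1)$ as well; for case (ii), when $b<0$ we get $g(z)\to-\infty$ as $z\to0^+$ and $g(z)\to-\infty$ as $z\to\alpha_1^-$, so no parity-of-infinity argument forces a zero in $(0,\alpha_1)$, consistent with the claimed absence of a zero there.

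Next I would obtain the matching upper bound on the number of positive zeros, so that the ``at least one'' in each interval becomes ``exactly one'' and there are no complex zeros. The natural tool is a counting contour: take a large rectangle (or semicircle, using evenness) $R_n$ whose vertical sides pass between $\alpha_n$ and $\alpha_{n+1}$ — precisely the kind of contour used in the proof of Theorem~\ref{theoremP1}, where such contours were shown to capture exactly the poles $0,\pm\alpha_1,\dots,\pm\alpha_n$. Integrating $g'(z)/g(z)$ (equivalently, $F'/F - G'/G - 1/z$) around $R_n$ and letting $n\to\infty$, the number of zeros of $F$ inside equals the number of zeros of $zG(z)$ inside plus a boundary term; since $g(z)\to 0$ along the contour directions (this decay was essentially established in the proof of Theorem~\ref{theoremP1} and in \eqref{HP} the tail of the series is $O(1/z)$ while $b/z$ also decays), the winding number of $g$ along $R_n$ tends to the difference of the pole and zero counts, pinning the total number of zeros of $F$ with $|z|<\alpha_{n+1}$ to be exactly $n$ in case (i) (the $n-1$ intervals $(\alpha_m,\alpha_{m+1})$ for $m<n$, plus $(0,\alpha_1)$, plus nothing else) and exactly $n-1$ in case (ii). Combined with the $\ge 1$-per-interval lower bound, this yields exactly one zero in each prescribed interval and no zeros elsewhere — in particular all zeros are real.

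An alternative, perhaps cleaner, route for the upper bound avoids explicit contour integration: observe that the poles of $g$ on the real line are all simple with \emph{positive} residues (and the residue $b$ at $0$ has a fixed sign), which means $g$ is \emph{decreasing} on each interval of analyticity — indeed a function of the form $\sum c_m\big(\tfrac1{z-\alpha_m}+\tfrac1{z+\alpha_m}\big) + b/z$ with all $c_m>0$ has derivative $-\sum c_m\big(\tfrac1{(z-\alpha_m)^2}+\tfrac1{(z+\alpha_m)^2}\big) - b/z^2$, which for $b\ge 0$ is strictly negative throughout, so $g$ is strictly monotone on each real interval between poles and hence has at most one real zero there; for $b<0$ one must check monotonicity survives on $(0,\alpha_1)$ or argue separately that the single possible zero of $g$ near $0$ is excluded by the hypothesis $|F(iy)|>0$. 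This monotonicity disposes of the real zeros cleanly; to rule out \emph{complex} zeros one still needs the global count, so the argument-principle step cannot be fully bypassed — and that is where I expect the main technical work to lie, namely in justifying that the boundary contributions along $R_n$ vanish in the limit, for which I would lean directly on the asymptotic estimates for $G$ and $F$ already developed in the proof of Theorem~\ref{theoremP1} (Hankel's formula, the lower bounds of Lemma~\ref{lemmaP1}, and the decay of the partial-fraction tail).
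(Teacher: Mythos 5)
Your first half (the sign change of $g(z)=F(z)/\big(zG(z)\big)$ across each interval, and monotonicity when $b\ge 0$) is sound and runs parallel to the paper's observation that the expansion decreases between consecutive poles. The genuine gap is in the step that is supposed to rule out complex zeros and extra positive zeros. Theorem \ref{lemmaP} is an \emph{abstract} statement: $F$ and $G$ are arbitrary even entire functions satisfying \eqref{HP}, with no growth or asymptotic hypotheses. So you cannot ``lean directly on'' Hankel's asymptotic formula, Lemma \ref{lemmaP1}, or the contour estimates from the proof of Theorem \ref{theoremP1} -- those are properties of Bessel functions and Hankel transforms, not of a general $G$ whose zeros $\alpha_m$ need not even be separated in any quantitative way (a vertical side of $R_n$ may be forced arbitrarily close to poles). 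Moreover, even granting decay of $g$ on the contour, the argument-principle step is not closed: the winding number of $g$ along $R_n$ \emph{is} $Z-P$ by definition of the argument principle, and smallness of $|g|$ on the contour does not determine that winding number (a curve staying near the origin can wind any number of times); to ``pin'' $Z$ you would need lower bounds on $|g|$ and control of $\arg g$ along $R_n$, which the hypotheses do not supply. A further warning sign: your contour count in case (ii) would give exactly $n-1$ zeros per half-plane \emph{without} ever using the hypothesis $|F(iy)|>0$, i.e.\ it would prove that hypothesis superfluous. It is not: the whole point of that assumption is that when $b<0$ a pair of purely imaginary zeros can survive the construction, and it is excluded only by positivity of $F$ on the imaginary axis. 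Similarly, your monotonicity fix for $b<0$ is off target twice over: strict monotonicity of $g$ can fail on \emph{every} interval (the term $-b/z^2$ is positive there too, not just on $(0,\alpha_1)$), and a hypothetical real zero in $(0,\alpha_1)$ cannot be excluded by $|F(iy)|>0$, which concerns the imaginary axis only.

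The paper avoids all of this by working with the $N$th partial sums $S_N(z)$ of \eqref{HP}, which are rational with an \emph{even numerator polynomial $P_{2N}$ of degree exactly $2N$}. For the polynomial one can count everything: monotone decrease between poles gives one zero per interval $(\alpha_m,\alpha_{m+1})$, $m\le N-1$; if $b>0$ one more lies in $(0,\alpha_1)$ and all $2N$ zeros are real; if $b<0$ the behavior $S_N(z)\sim\big(b+2\sum_{m\le N}c_m\big)/z$ at infinity shows the remaining pair is either real beyond $\alpha_N$ or purely imaginary (evenness of $P_{2N}$), and the degree bound forbids any extra positive zeros, e.g.\ in $(0,\alpha_1)$. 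One then lets $N\to\infty$ and transfers the count to $F/(zG)$, using $|F(\alpha_m)|>0$ to prevent zeros from drifting onto the $\alpha_m$ and, in case (ii), $|F(iy)|>0$ to kill the possible imaginary pair. If you want to salvage your route, you would have to replace the contour count by some such approximation-and-limit (Hurwitz-type) argument, since no global analytic information about $F$ and $G$ beyond \eqref{HP} is available.
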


This important result is an abstract formulation of what Hurwitz and P\'olya observed
(\cite[\S 5]{P}). To reproduce their ideas of proof in the present setting,
let us consider the $N$th partial sum of \eqref{HP}
\begin{align*}
S_N(z) &= \frac{b}{z} + \sum_{m=1}^N c_m\left(\frac{1}{z-\alpha_m} +\frac{1}{z+\alpha_m}\right)\\
&= \frac{P_{2N}(z)}{\,z\left(z^2-\alpha_1^2\right)\cdots \left(z^2- \alpha_N^2\right)\,},
\end{align*}
where $P_{2N}(z)$ is an even polynomial of degree $2N$.
As $\,S_N(x),\,x>0,$ takes every real value in each open interval between
two consecutive poles in a monotonically decreasing manner, it is evident that
$S_N(z)$ has exactly one positive zero in $\,\big(\alpha_m,\,\alpha_{m+1}\big),\,m= 1, 2, \cdots,N-1,$
whatever $b$ is.

If $\,b>0,$ then $S_N(z)$ has exactly one positive zero in
$\,\big(0, \,\alpha_1\big)\,$ by the same reason and thus all of $2N$ zeros of $S_N(z)$
are located in view of symmetry.

In the case $\,b<0,$ if we consider the asymptotic behavior
\begin{align*}
S_N(z) &= \frac{1}{z} \sum_{m=1}^N \frac{\,(b+2c_m) z^2 -b \alpha_m^2\,}{z^2 - \alpha_m^2}\\
&\sim \frac{\,b + 2(c_1 +\cdots + c_{N})\,}{z}\quad\text{as $\,|z|\to\infty,$}
\end{align*}
we infer that $S_N(z)$ has one positive zero in $\,\big(\alpha_N,\,\infty\big)\,$ only when
$$b + 2(c_1 +\cdots + c_{N})<0.$$
Otherwise, $S_N(z)$ has two complex zeros which must be purely imaginary because $P_{2N}(z)$ is even.
Consequently, $S_N(z)$ has either $2N$ real zeros or $2N-2$ real zeros and two purely imaginary
zeros.

If we transfer the above analyses to the function $\,F(z)/zG(z)$ by letting $\,N\to\infty,$
since the possibility that the zeros move towards
$\alpha_m$'s are excluded due to the hypothesis $\,\left|F(\alpha_m)\right|>0\,$
for all $m$, we may conclude that if $\,b>0,$ then
$F(z)$ has only real zeros distributed in the pattern of (i) and if $\,b<0,$ then
$F(z)$ has real zeros distributed in the pattern of (ii) and possibly two purely imaginary zeros.
Since it is assumed that $\,|F(iy)|>0\,$ for all $\,y\in\mathbb{R},$
$F(z)$ can not have purely imaginary zeros and the assertion follows.

\begin{remark}\label{remarkHP}
Without assuming that
$\,\left|F(\alpha_m)\right|>0\,$ for all $m$,
an inspection on the proof indicates that $F(z)$ still has
an infinity of zeros but no complex zeros. Since the zeros of $S_N(z)$ could move towards
the end-points in the limiting process, each $\alpha_m$ could be a zero of $F(z)$
and it is not difficult to infer that $F(z)$ has at most two zeros in each interval
$\,\left(0,\,\alpha_1\right],\,\left(\alpha_m,\,\alpha_{m+1}\right]\,$
in the case (i) and $\,\left[\alpha_m,\,\alpha_{m+1}\right)\,$
in the case (ii), where $\, m=1,2,\cdots.$
\end{remark}

\section{Zeros of Hankel transforms}
By applying the preceding Hurwitz-P\'olya theorem on the basis of
partial fraction expansion formula \eqref{Paf3}, we
are about to obtain information on the zeros of Hankel transforms.
Before proceeding further, let us state a simple consequence of \eqref{Paf3} which will be
used to determine the multiplicity of each zero. As usual, we shall denote
the Wronskian of $f(z), g(z)$ by
$$W[f, g](z) =  f(z)g'(z)-f'(z) g(z).$$

\begin{lemma}
Under the same assumptions of Theorem \ref{theoremP1}, we have
\begin{align}\label{W1}
W\big[\bJ_\mu, \bH_\nu (f)\big](z)
=8z(\mu+1)\bJ_\mu^2(z)\sum_{m=1}^\infty
\frac{\,\bH_\nu(f)\rb{j_{\mu, m}}\,}{\bJ_{\mu+1}\rb{j_{\mu, m}}}\,\frac{1}{\rb{z^2 - j_{\mu, m}^2}^2},
\end{align}
valid for all $\,z\in\mathbb{C}.$
\end{lemma}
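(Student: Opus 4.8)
The plan is to differentiate the partial fraction expansion \eqref{Paf3} and recognize the resulting expression as a Wronskian. Starting from
\[
\frac{\bH_\nu(f)(z)}{z\bJ_\mu(z)} = \frac{\bH_\nu(f)(0)}{z} - 2(\mu+1)\sum_{m=1}^\infty \frac{\bH_\nu(f)\rb{j_{\mu,m}}}{j_{\mu,m}^2\bJ_{\mu+1}\rb{j_{\mu,m}}}\rb{\frac{1}{z-j_{\mu,m}}+\frac{1}{z+j_{\mu,m}}},
\]
I would first combine each pair of simple fractions into $\frac{2z}{z^2-j_{\mu,m}^2}$, so the right-hand side becomes $\frac{\bH_\nu(f)(0)}{z} - 4(\mu+1)\sum_m \frac{\bH_\nu(f)(j_{\mu,m})}{j_{\mu,m}^2\bJ_{\mu+1}(j_{\mu,m})}\cdot\frac{z}{z^2-j_{\mu,m}^2}$. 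The left side equals $\frac{1}{z}\cdot\frac{\bH_\nu(f)(z)}{\bJ_\mu(z)}$, so after multiplying through I have an identity for $\bH_\nu(f)(z)/\bJ_\mu(z)$ on $\mathcal{D}_\mu$.

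The key observation is that $\frac{d}{dz}\left(\frac{\bH_\nu(f)(z)}{\bJ_\mu(z)}\right) = \frac{\bJ_\mu(z)\,\bH_\nu(f)'(z) - \bJ_\mu'(z)\,\bH_\nu(f)(z)}{\bJ_\mu^2(z)} = \frac{W[\bJ_\mu,\bH_\nu(f)](z)}{\bJ_\mu^2(z)}$. So I would differentiate the right-hand side of the rearranged identity term by term, which is legitimate on compact subsets of $\mathcal{D}_\mu$ by the uniform convergence guaranteed in the Corollary together with the standard theorem on differentiating uniformly convergent series of analytic functions. Differentiating $\frac{z}{z^2-j_{\mu,m}^2}$ gives $\frac{(z^2-j_{\mu,m}^2) - z\cdot 2z}{(z^2-j_{\mu,m}^2)^2} = \frac{-(z^2+j_{\mu,m}^2)}{(z^2-j_{\mu,m}^2)^2}$, while the $\bH_\nu(f)(0)/z$ term is killed after one more step; I then multiply by $\bJ_\mu^2(z)$ to isolate the Wronskian. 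This does not yet match \eqref{W1}, so the real work is an algebraic simplification: I expect the $\frac{1}{z}$ piece and the $-(z^2+j_{\mu,m}^2)$ numerators to recombine. Concretely, I would instead differentiate the original form $\frac{\bH_\nu(f)(z)}{\bJ_\mu(z)} = \bH_\nu(f)(0) - 4(\mu+1)z^2\sum_m \frac{\bH_\nu(f)(j_{\mu,m})}{j_{\mu,m}^2\bJ_{\mu+1}(j_{\mu,m})}\cdot\frac{1}{z^2-j_{\mu,m}^2}$ (having absorbed one factor of $z$), since then the constant $\bH_\nu(f)(0)$ drops out under differentiation and the product rule on $z^2\cdot\frac{1}{z^2-j_{\mu,m}^2}$ yields $\frac{2z(z^2-j_{\mu,m}^2) - z^2\cdot 2z}{(z^2-j_{\mu,m}^2)^2} = \frac{-2z\,j_{\mu,m}^2}{(z^2-j_{\mu,m}^2)^2}$; the factor $j_{\mu,m}^2$ then cancels against the $j_{\mu,m}^{-2}$ in the coefficient, and multiplying by $\bJ_\mu^2(z)$ and collecting the constant $-4(\mu+1)\cdot(-2) = 8(\mu+1)$ produces exactly \eqref{W1}.

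The main obstacle is the termwise differentiation and the validity of \eqref{W1} at the excluded points $z = \pm j_{\mu,m}$, where the left-hand side of \eqref{Paf3} has poles but the Wronskian is entire. After establishing \eqref{W1} on $\mathcal{D}_\mu$, I would argue that both sides are analytic on all of $\mathbb{C}$: the left side because $W[\bJ_\mu,\bH_\nu(f)]$ is a Wronskian of entire functions, and the right side because the singularities at $\pm j_{\mu,m}$ are removable — near $z = j_{\mu,k}$ the factor $\bJ_\mu^2(z)$ has a double zero (as $\bJ_\mu$ has a simple zero there), which exactly cancels the $(z^2 - j_{\mu,k}^2)^{-2}$ singularity of the $m=k$ term, all other terms being analytic there. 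Hence both sides are entire and agree on the dense open set $\mathcal{D}_\mu$, so they agree on $\mathbb{C}$ by the identity theorem. The uniform convergence on compacta, needed both for termwise differentiation and for the local analyticity of the series, is supplied directly by the Corollary following Theorem \ref{theoremP1}.
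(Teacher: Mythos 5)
Your proposal is correct and follows essentially the same route as the paper: multiply \eqref{Paf3} by $z$, differentiate termwise (justified by the locally uniform convergence on $\mathcal{D}_\mu$), recognize the left side as $W[\bJ_\mu,\bH_\nu(f)]/\bJ_\mu^2$, and extend to all of $\mathbb{C}$ since the singularities at $\pm j_{\mu,m}$ are removable. Your more detailed cancellation bookkeeping and the identity-theorem argument simply spell out what the paper states briefly.
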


If we multiply by $z$ and differentiate both sides of \eqref{Paf3},
where termwise differentiation is permissible due to the uniform convergence
of the derived series on every compact subset of $\mathcal{D}_\mu$, we obtain
the Wronskian formula \eqref{W1} on $\mathcal{D}_\mu$.
Since the singularities at $\pm j_{\mu, m}$ are easily seen to be removable,
the formula remains valid for all $\,z\in\mathbb{C}.$

\begin{theorem}\label{theoremZ1}
 Let $\,\nu>-1,\,-1<\mu<\nu+2\,$ and $f(t)$ be a positive integrable function, defined for $\,0<t<1,$
 subject to the condition \eqref{IA}. Put
 \begin{equation*}
\sigma_m = (-1)^{m+1}\bH_\nu(f) \rb{j_{\mu, m}},
\quad m=1,2,\cdots.
\end{equation*}
If $\rb{\sigma_m}$ keeps constant sign for all $m$, then $\bH_\nu(f)(z) $
has only an infinity of real simple zeros whose positive zeros
are distributed as follows.
\begin{itemize}
\item[\rm(i)] If $\,\sigma_m>0\,$ for all $m$, then each of the intervals
$$\,\big(j_{\mu, m}, \, j_{\mu, m+1}\big),\quad m=1,2,\cdots,$$
contains exactly one zero with no positive zeros elsewhere.
\item[\rm(ii)] If $\,\sigma_m<0\,$ for all $m$, then each of the intervals
$$\,\big(0,\,j_{\mu, 1}\big),\,\,\big(j_{\mu, m}, \, j_{\mu, m+1}\big),\quad m=1,2,\cdots,$$
contains exactly one zero with no positive zeros elsewhere.
\end{itemize}
\end{theorem}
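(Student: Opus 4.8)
The plan is to apply the Hurwitz--P\'olya theorem (Theorem~\ref{lemmaP}) with $F(z)=\bH_\nu(f)(z)$, $G(z)=\bJ_\mu(z)$ and $\alpha_m=j_{\mu,m}$, extracting the constants $b$ and $c_m$ from the partial fraction expansion \eqref{Paf3}. First I would record that the hypotheses of Theorem~\ref{lemmaP} are met: $\bJ_\mu$ is an even entire function whose only zeros are $\pm j_{\mu,m}$ (since $\bJ_\mu(0)=1$ and, away from $0$, $\bJ_\mu$ and $J_\mu$ share zeros by \eqref{H3}), $\bH_\nu(f)$ is even and entire by \eqref{H2}, the leading coefficient is $b=\bH_\nu(f)(0)=\int_0^1 t^{\nu+1/2}f(t)\,dt>0$ because $f>0$, and $|\bH_\nu(f)(j_{\mu,m})|=|\sigma_m|>0$ for every $m$ under the constant-sign hypothesis. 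Comparing \eqref{Paf3} with \eqref{HP} identifies
$$
c_m=-2(\mu+1)\,\frac{\bH_\nu(f)(j_{\mu,m})}{\,j_{\mu,m}^2\,\bJ_{\mu+1}(j_{\mu,m})\,}.
$$

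The decisive point is the sign of $\bJ_{\mu+1}(j_{\mu,m})$. Since $\bJ_\mu$ does not vanish between consecutive zeros and $\bJ_\mu(0)=1$, it is positive on $(0,j_{\mu,1})$ and changes sign at each (simple) zero $j_{\mu,m}$, so $\bJ_\mu'(j_{\mu,m})$ has sign $(-1)^m$; feeding this into $\bJ_\mu'(z)=-\tfrac{z}{2(\mu+1)}\bJ_{\mu+1}(z)$ from \eqref{W2} shows $\bJ_{\mu+1}(j_{\mu,m})$ has sign $(-1)^{m+1}$. Writing $\bH_\nu(f)(j_{\mu,m})=(-1)^{m+1}\sigma_m$ then gives $c_m=-\tfrac{2(\mu+1)}{\,j_{\mu,m}^2|\bJ_{\mu+1}(j_{\mu,m})|\,}\,\sigma_m$, i.e.\ $c_m$ is a negative multiple of $\sigma_m$. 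Hence in case (ii), where $\sigma_m<0$ for all $m$, we get $c_m>0$, and with $b>0$ Theorem~\ref{lemmaP}(i) applies directly, yielding real zeros, one in each of $(0,j_{\mu,1})$ and $(j_{\mu,m},j_{\mu,m+1})$, with none elsewhere among the positive reals. In case (i), where $\sigma_m>0$, we get $c_m<0$, so I would multiply \eqref{Paf3} by $-1$ and apply Theorem~\ref{lemmaP}(ii) to $\widetilde F=-\bH_\nu(f)$, for which $\widetilde c_m=-c_m>0$ and $\widetilde b=-b<0$; its extra hypothesis $|\widetilde F(iy)|>0$ for all real $y$ I would verify from $\bJ_\nu(iyt)=\sum_{k\ge0}\tfrac{(yt/2)^{2k}}{k!\,(\nu+1)_k}>0$ (all terms positive as $\nu+1>0$), whence $\bH_\nu(f)(iy)=\int_0^1 t^{\nu+1/2}f(t)\bJ_\nu(iyt)\,dt>0$ since $f>0$. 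This gives one real zero in each $(j_{\mu,m},j_{\mu,m+1})$ and none elsewhere among the positive reals. In both cases evenness of $\bH_\nu(f)$ transfers the count to the negative axis, $0$ is not a zero since $b>0$, and there are infinitely many zeros.

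Finally, to upgrade ``exactly one zero'' to ``simple zero,'' I would use the Wronskian identity \eqref{W1}. If $z_0>0$ is a zero of $\bH_\nu(f)$, then by the location just established $z_0\ne0$ and $z_0\ne j_{\mu,m}$ for all $m$, so $\bJ_\mu(z_0)\ne0$ and the left side of \eqref{W1} reduces to $\bJ_\mu(z_0)\,\bH_\nu(f)'(z_0)$. On the right side, $\tfrac{\bH_\nu(f)(j_{\mu,m})}{\bJ_{\mu+1}(j_{\mu,m})}=\tfrac{\sigma_m}{|\bJ_{\mu+1}(j_{\mu,m})|}$ has the constant sign of $\sigma_m$, so the series is a convergent sum of nonzero terms all of one sign and is therefore nonzero, while $8z_0(\mu+1)\bJ_\mu^2(z_0)\ne0$; hence $\bH_\nu(f)'(z_0)\ne0$, so $z_0$ is simple, and by evenness so are the negative zeros. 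I expect the only real difficulty to be the bookkeeping of signs --- keeping straight that the sign $(-1)^{m+1}$ of $\bJ_{\mu+1}(j_{\mu,m})$ exactly cancels the $(-1)^{m+1}$ in $\sigma_m$ so that $c_m$ is a clean negative multiple of $\sigma_m$, and consequently that the hypothesis $\sigma_m>0$ routes through Theorem~\ref{lemmaP}(ii) after negation while $\sigma_m<0$ routes through Theorem~\ref{lemmaP}(i); the analytic inputs (entireness, the value at the origin, positivity on the imaginary axis, and convergence of the Wronskian series) are all supplied by earlier results.
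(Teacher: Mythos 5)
Your proposal is correct and follows essentially the same route as the paper: identify $b$ and $c_m$ from the expansion \eqref{Paf3}, note that ${\rm sgn}\,\bJ_{\mu+1}(j_{\mu,m})=(-1)^{m+1}$ so that the constant-sign hypothesis on $\sigma_m$ makes all $c_m$ of one sign, route the case $\sigma_m<0$ through Theorem \ref{lemmaP}(i) and the case $\sigma_m>0$ through Theorem \ref{lemmaP}(ii) applied to $-\bH_\nu(f)$ (with positivity on the imaginary axis), and obtain simplicity from the Wronskian formula \eqref{W1}. The only cosmetic difference is that you derive the sign of $\bJ_{\mu+1}(j_{\mu,m})$ from the sign changes of $\bJ_\mu$ and identity \eqref{W2}, whereas the paper quotes the interlacing of the zeros of $J_\mu$ and $J_{\mu+1}$; both are fine.
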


For the proof, we first note that $\bH_\nu(f)(z) $ is entire with
\begin{align*}
\bH_\nu(f)(iy) = \sum_{m=0}^\infty \left(\int_0^1 t^{2m+\nu+1/2} f(t) dt\right)
\frac{(y/2)^{2m}}{\,m!\, (\nu+1)_m\,}>0
\end{align*}
for all $\,y\in\R\,$ and $\bH_\nu(f)(z)$ is even.
Owing to the inequalities (\cite[\S 15.22]{Wa})
$$0<j_{\mu, 1}<j_{\mu+1, 1}<j_{\mu, 2}<j_{\mu+1, 2}<\cdots\quad(\mu>-1)$$
and the positivity $\,\bJ_{\mu+1}(x)>0\,$ for $\,0<x<j_{\mu+1, 1},$ we have
$${\rm{sgn}}\,\Big[\bJ_{\mu+1} \rb{j_{\mu, m}}\Big] = (-1)^{m+1},\quad m =1, 2, \cdots.$$

In the case (i), if we notice the inequalities
$$b \equiv -\int_0^1 t^{\nu+1/2} f(t) dt <0,
\quad c_m \equiv 2(\mu+1)\frac{\bH_\nu(f)\rb{j_{\mu, m}}}{\,j_{\mu, m}^2\bJ_{\mu+1}\rb{j_{\mu, m}}\,}>0
$$
for each $m$ and rewrite formula \eqref{Paf3} into the form
\begin{align*}
- \frac{\bH_\nu(f)\rb{z}}{z \bJ_\mu (z)}  =\frac{b}{z} +
\sum_{m=1}^\infty c_m\left(\frac{1}{z- j_{\mu, m}} +\frac{1}{z+j_{\mu, m}}\right),
\end{align*}
then the stated results are immediate consequences of Theorem \ref{lemmaP} except the simplicity of zeros.
Similarly, the results in the case (ii) can be proved, except the simplicity of zeros, with obvious modifications.

Regarding the simplicity of each zero, we observe that Wronskian formula
\eqref{W1} implies the inequalities $\,W\big[\bJ_\mu, \bH_\nu (f)\big](x) >0\,$ for all $\,x>0\,$ in the case (i) and
$\,W\big[\bJ_\mu, \bH_\nu (f)\big](x) <0\,$ for all $\,x>0\,$ in the case (ii). If
$$\bH_\nu(f)(\widehat{x}) = 0 = \bH_\nu(f)'(\widehat{x})\quad\text{for some}\quad \widehat{x}>0,$$
then the Wronskian of $\bJ_\mu, \bH_\nu (f)$ vanishes at $\widehat{x}$, which leads to a contradiction in any case.
It is thus shown that any positive zero of $\bH_\nu(f)(x)$ must be simple and
Theorem \ref{theoremZ1} is now completely proved.\qed

\section{Laguerre-P\'olya class}
Our purpose here is to identify the Hankel transform as an
entire function of the Laguerre-P\'olya class under the same conditions for sign changes.

\medskip

{\bf 1.}
For an integrable function $f(t)$ satisfying \eqref{IA},
it is elementary to find by integrating termwise
that $\bH_\nu(f)(z)$ is an even entire function with
\begin{align}\label{F0}
\left\{\begin{aligned}
\bH_\nu(f)(z) &{\,\,= \,\sum_{m=0}^\infty \frac{\, \beta_m(f)\,}{\,m!\,(\nu+1)_m\,}\left(-\frac{z^2}{4}\right)^{m},\quad\text{where}}\\
\beta_m(f)\,\,  &{\,\,=\, \int_0^1 t^{2m+\nu+1/2}f(t)dt,\quad m=0, 1, 2, \cdots.}\end{aligned}\right.
\end{align}

Concerning the (growth) order of $\bH_\nu(f)(z)$, let
$$ b_m = \frac{(-1)^m \beta_m(f)}{4^m m! (\nu+1)_m}, \quad m=0, 1,\cdots,$$
the coefficient of $z^{2m}$. By using generalized Stirling's formula \cite[5.11.7]{dlmf}
$\,\Gamma(m+ c) \,\sim\, \sqrt{2\pi}\,e^{-m} m^{m+c-1/2}\,$ as $\,m\to\infty,$
where $c$ is any real number, and $\,(\nu+1)_m = \Gamma(m+\nu+1)/\Gamma(\nu+1),$
it is easy to see that
$$\log (1/|b_m|) \,\sim\, 2m\log m \quad\text{as $\,m\to\infty\,$}$$
and consequently $\bH_\nu(f)(z)$ has the order
\begin{equation*}\label{order}
\limsup \frac{2m\log 2m}{\,\log (1/ |b_m|)\,\,}  = 1
\end{equation*}
(see \cite{Le} for the definition and  \cite{BK} for analogous arguments).

\medskip

{\bf 2.} Suppose now that $f(t)$ satisfies the additional assumptions of Theorem \ref{theoremZ1} so that
$\bH_\nu(f)(z)$ becomes a real entire function having only
an infinity of real simple zeros. Let
$\big(\zeta_{m}\big)$ be the sequence of all positive zeros arranged in ascending order
of magnitude. Since the positive zeros of $\bH_\nu(f)(z), J_\mu(z)$ are shown to be interlaced and
$$\sum_{m=1}^\infty \frac{1}{j_{\mu, m}} = +\infty,
\quad \sum_{m=1}^\infty \frac{1}{j_{\mu, m}^2} = \frac{1}{4(\mu+1)},$$
we find that
\begin{equation*}\label{F2}
\sum_{m=1}^\infty \frac{1}{\zeta_{m}} = +\infty\quad\text{but}\quad
\quad \sum_{m=1}^\infty \frac{1}{\zeta_{m}^2}<\infty.
\end{equation*}

In view of Hadamard's factorization theorem (\cite[\S 4.2]{Le}), it is thus shown that the Hankel transform $\bH_\nu(f)(z)$ belongs to the
Laguerre-P\'olya class, to be denoted by $\mathcal{LP}$ hereafter, and has genus one.

\begin{theorem}\label{theoremLP}
Let $\,\nu>-1,\,-1<\mu<\nu+2.$ If $f(t)$ is a positive integrable function satisfying \eqref{IA}
such that the sequence $\,\big\{\bH_\nu(f) \rb{j_{\mu, m}}\big\}\,$ alternates in sign for $\, m=1,2,\cdots,$ then
$\,\bH_\nu(f)\in \mathcal{LP}\,$ with
\begin{equation}\label{F3}
\bH_\nu(f)(z) = \bH_\nu(f)(0)\prod_{m=1}^\infty \left(1-\frac{z^2}{\zeta_m^2}\right).
\end{equation}
\end{theorem}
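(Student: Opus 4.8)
The plan is to assemble the statement of Theorem~\ref{theoremLP} directly from the three ingredients already established in the preceding two subsections of \S5, since the theorem is essentially a summary of that discussion. First I would invoke Theorem~\ref{theoremZ1}: the hypothesis that $\big\{\bH_\nu(f)(j_{\mu,m})\big\}$ alternates in sign is precisely the condition that $\sigma_m = (-1)^{m+1}\bH_\nu(f)(j_{\mu,m})$ keeps constant sign, so $\bH_\nu(f)(z)$ is a real entire function possessing only an infinity of real simple zeros, and its positive zeros interlace with $(j_{\mu,m})$. Denote by $(\zeta_m)$ the increasing sequence of positive zeros; by evenness (clear from \eqref{F0}) the full zero set is $\{\pm\zeta_m\}$, together with a possible zero at the origin which is excluded because $\bH_\nu(f)(0) = \beta_0(f) = \int_0^1 t^{\nu+1/2}f(t)\,dt > 0$ by positivity of $f$.

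Next I would record the two facts about the zero counting sums. From the interlacing $j_{\mu,m} < \zeta_m$ (or the analogous bound coming from Theorem~\ref{theoremZ1}(i)/(ii)) together with the classical series $\sum 1/j_{\mu,m} = +\infty$ and $\sum 1/j_{\mu,m}^2 = 1/\big(4(\mu+1)\big)$, one gets $\sum 1/\zeta_m = +\infty$ but $\sum 1/\zeta_m^2 < \infty$; this is exactly the computation already displayed just before the theorem statement. Combined with subsection~{\bf 1.}, which shows $\bH_\nu(f)(z)$ has order at most $1$, Hadamard's factorization theorem applies: an entire function of order $\le 1$ with only real zeros whose reciprocals are not absolutely summable but whose squared reciprocals are summable has genus exactly one and factors as $Cz^\ell e^{bz}\prod(1-z/\sigma_m)e^{z/\sigma_m}$. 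Matching this to the canonical form \eqref{LP1} with $a=0$ gives $\bH_\nu(f)\in\mathcal{LP}$.

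It then remains to collapse the Hadamard product into the clean form \eqref{F3}. Here I would use evenness: pairing the factor at $\zeta_m$ with the factor at $-\zeta_m$ gives $(1-z/\zeta_m)e^{z/\zeta_m}(1+z/\zeta_m)e^{-z/\zeta_m} = 1 - z^2/\zeta_m^2$, so the exponential factors cancel in pairs and the linear exponential $e^{bz}$ must vanish ($b=0$) since the product is even and $z^\ell = 1$ (no zero at $0$). Evaluating at $z=0$ fixes $C = \bH_\nu(f)(0)$, yielding \eqref{F3}. One should check that the rearrangement of the conditionally convergent Hadamard product into the pairwise-grouped product is legitimate — this is standard for even functions of genus one, since the grouped product $\prod(1-z^2/\zeta_m^2)$ converges absolutely on compacta once $\sum 1/\zeta_m^2 < \infty$, and agrees with the genus-one canonical product after the symmetric cancellation.

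The main obstacle, such as it is, is not analytic depth but bookkeeping: one must make sure the order-$\le 1$ estimate from subsection~{\bf 1.} plus the divergence of $\sum 1/\zeta_m$ together pin down the genus to be exactly one (so that no spurious $e^{-az^2}$ term appears and the linear term is forced to zero by parity), and one must confirm there is no zero at the origin so that $\ell = 0$. Both points are already handled in the run-up to the theorem, so the proof is genuinely a two-line assembly: apply Theorem~\ref{theoremZ1} for the zero structure, apply Hadamard with the order and zero-sum information for membership in $\mathcal{LP}$ and genus one, and group the product in conjugate pairs to obtain \eqref{F3}.
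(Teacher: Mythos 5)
Your proposal is correct and follows essentially the same route as the paper: invoke Theorem \ref{theoremZ1} for the real simple zeros, use the interlacing with $\big(j_{\mu,m}\big)$ together with $\sum 1/j_{\mu,m}=\infty$ and $\sum 1/j_{\mu,m}^2=1/\big(4(\mu+1)\big)$ to get the zero sums, and combine the order-at-most-one estimate with Hadamard's factorization to conclude membership in $\mathcal{LP}$ and the even product form \eqref{F3}. Your extra bookkeeping (parity killing the linear exponential, $\bH_\nu(f)(0)>0$ ruling out a zero at the origin, pairing $\pm\zeta_m$) is exactly what the paper leaves implicit, so there is nothing to correct.
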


While the theorem is of considerable interest in the theory of entire functions,
we present an application of \eqref{F3} after
Euler and Rayleigh \cite[\S 15.5, 15.51]{Wa}. By differentiating
logarithmically and expanding each term in geometric series,  it is immediate to deduce the relation
$$ -\frac{d}{dz} \bH_\nu(f)(\sqrt z)
= \bH_\nu(f)(\sqrt z) \sum_{k=0}^\infty \Delta_k z^k,\quad \Delta_k :=
\sum_{m=1}^\infty \frac{1}{\,\zeta_m^{2k+2}\,},$$
valid for all $\,|z|<\zeta_1.$ If we expand both sides into power series,
with the aid of \eqref{F0} and the Cauchy product formula, and equate the coefficients, we can
compute $(\Delta_k)$ explicitly in an inductive manner.
For example,
\begin{align}
\sum_{m=1}^\infty \frac{1}{\,\zeta_m^2\,} &= \frac {\beta_1(f)}{\,4(\nu+1) \beta_0(f)\,},\label{F4}\\
\sum_{m=1}^\infty \frac{1}{\,\zeta_m^4\,} &= \frac{\,(\nu+2) [\beta_1(f)]^2 - (\nu+1) \beta_0(f)\beta_2(f)\,}
{\,16(\nu+1)^2 (\nu+2) \left[\beta_0(f)\right]^2\,}.
\end{align}

We refer to \cite{GM}, \cite{IM}, \cite{Sn2} and further references therein for the related sums of zeros
of Bessel functions and various applications.

\medskip
{\bf 3.} It is well known that each entire function of the Laguerre-P\'olya class $\mathcal{LP}$
arises as the uniform limit of real polynomials having only real zeros. By Rolle's theorem,
hence, it is simple to prove that $\mathcal{LP}$ is closed under differentiation.
By exploiting the method of partial fractions, we shall now prove that certain subclass of $\mathcal{LP}$
is closed under more general differential operators, which will be useful in later applications.

\begin{theorem}\label{lemmaLP}
Let $G(z)$ be an even entire function of the form
\begin{equation}\label{typeLP}
G(z) = \sum_{m=0}^\infty \frac{\,(-1)^m\gamma_m\,}{m!}\,z^{2m}\qquad(z\in\mathbb{C}),
\end{equation}
where $(\gamma_m)$ is a sequence of non-zero reals subject to the condition
\begin{equation}\label{condLP}
\gamma_m = O\left(\frac{1}{m!}\right)\quad\text{as $\,m\to\infty.$}
\end{equation}
If $\,G\in\mathcal{LP},$ then $\,G_\alpha\in\mathcal{LP}\,$ for each $\,\alpha\ge 0,$ where
$G_\alpha$ is defined by
$$G_\alpha(z) = \left(z\frac{d}{dz} +\alpha\right) G(z)\qquad(z\in\mathbb{C}).$$
\end{theorem}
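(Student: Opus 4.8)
The plan is to realize $G$ as a normalized Hankel transform $\bH_\nu(f)$ for a suitable $\nu$ and density $f$, and then identify the operator $z\,d/dz + \alpha$ with an operation on the density that preserves positivity and the sign-alternation hypothesis of Theorem~\ref{theoremLP}. First I would observe that the series \eqref{typeLP}, after matching with \eqref{F0}, forces $\gamma_m = \beta_m(f)/\big(2^{2m}(\nu+1)_m\big)$; so I need an $f>0$ on $(0,1)$ whose moments $\beta_m(f)=\int_0^1 t^{2m+\nu+1/2}f(t)\,dt$ reproduce the given $\gamma_m$. This is a (truncated) Stieltjes moment problem, and here the decay condition \eqref{condLP}, $\gamma_m = O(1/m!)$, is exactly what guarantees that $G$ has order $\le 1$ and that the candidate $f$ is integrable; combined with $G\in\mathcal{LP}$, standard moment-problem positivity (the Hausdorff/Stieltjes moment criteria, or directly the fact that a genus-one $\mathcal{LP}$ function of this form is a limit of products $\prod(1-z^2/\zeta_m^2)$ with real $\zeta_m$) should produce a positive density. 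A cleaner route, avoiding the full moment problem, is to proceed purely formally at the level of coefficients and only invoke the Hankel-transform machinery for the sign bookkeeping: the key structural fact is the elementary identity
\begin{equation*}
\left(z\frac{d}{dz}+\alpha\right)\bH_\nu(f)(z) = \bH_\nu(g)(z),\qquad g(t) = \left(t\frac{d}{dt} + \alpha + \nu + \tfrac12\right)f(t),
\end{equation*}
which follows by differentiating under the integral sign in \eqref{H2} and integrating by parts in $t$ (the boundary term at $t=1$ must be handled, and at $t=0$ it vanishes under \eqref{IA}); equivalently, one can check it termwise on the power series \eqref{F0}. Once this is in place, $G_\alpha = \bH_\nu(g)$, so it remains to verify that $g$ inherits positivity and that $\{\bH_\nu(g)(j_{\mu,m})\}$ still alternates in sign, whereupon Theorem~\ref{theoremLP} gives $G_\alpha\in\mathcal{LP}$ directly.

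The heart of the argument is therefore to show that the sign-alternation of $\{G(j_{\mu,m})\}$ propagates to $\{G_\alpha(j_{\mu,m})\}$. Here I would exploit the reverse direction: since $G\in\mathcal{LP}$ is even of genus one with $\gamma_m\ne 0$, it has the factorization $G(z)=G(0)\prod_{m\ge1}(1-z^2/\zeta_m^2)$ with real $\zeta_m>0$, and I want the positive zeros of $G_\alpha$ to interlace those of $G$. Write $G_\alpha(z) = \alpha G(z) + zG'(z)$. On the positive real axis, $zG'(z)/G(z) = -\sum_m \tfrac{2z^2}{\zeta_m^2-z^2}$, so between consecutive zeros $\zeta_m<\zeta_{m+1}$ this quantity runs monotonically from $+\infty$ down to $-\infty$ (examining the signs of the telescoping terms just past $\zeta_m$ and just before $\zeta_{m+1}$), hence $\alpha + zG'(z)/G(z)$ has exactly one zero there; likewise there is exactly one zero in $(0,\zeta_1)$ when $\alpha>0$ (and $z=0$ is the zero when $\alpha=0$, consistent with $G_\alpha$ still even). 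Thus $G_\alpha$ has exactly one positive zero in each gap of the zero set of $G$ (plus possibly one in $(0,\zeta_1)$), so its zeros interlace those of $G$. Choosing $\mu$ with $\{j_{\mu,m}\}$ already interlacing the $\zeta_m$ (which is the content of Theorem~\ref{theoremZ1}, the hypothesis under which the sign pattern $\sigma_m$ holds), a short comparison of interlacing patterns shows $\{G_\alpha(j_{\mu,m})\}$ alternates in sign as well, because $G_\alpha$ changes sign exactly once strictly between consecutive $j_{\mu,m}$'s and $G_\alpha(j_{\mu,m})\ne0$.

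The step I expect to be the main obstacle is the very first one: establishing that the hypotheses \eqref{typeLP}--\eqref{condLP} together with $G\in\mathcal{LP}$ actually yield an admissible positive $f$ with $\int_0^1 t^{\nu+1/2}|f|<\infty$, i.e.\ that $G$ genuinely lies in the range of $\bH_\nu$ over positive densities satisfying \eqref{IA}. The decay $\gamma_m=O(1/m!)$ makes the formal inverse transform converge, but positivity of the reconstructed $f$ is not automatic from $\mathcal{LP}$ membership alone and may require either choosing $\nu$ large (shifting into a regime where the relevant fractional integral of $G$ is manifestly positive) or an additional density-type argument approximating $G$ by finite products and passing to the limit. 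A secondary technical point is the boundary term at $t=1$ in the integration-by-parts identity for $G_\alpha$; if it does not vanish, one absorbs it by noting it contributes a multiple of $\bJ_\nu(z)=\bH_\nu(\delta\text{-like})$, or more carefully by reformulating the identity so that $g$ is genuinely the density of $G_\alpha$ up to an explicit constant, after which the positivity discussion applies verbatim. Everything else — the interlacing computation, the sign bookkeeping, and the final appeal to Theorem~\ref{theoremLP} — is routine.
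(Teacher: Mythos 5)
There is a genuine gap, and it sits exactly where you suspected: the reduction of the theorem to Theorem \ref{theoremLP}. That theorem needs $G=\bH_\nu(f)$ with $f$ positive, satisfying \eqref{IA}, and with $\{G(j_{\mu,m})\}$ alternating in sign; none of this is implied by the hypotheses of Theorem \ref{lemmaLP}, and it is in fact false in general. Any $G$ covered by Theorem \ref{theoremZ1}/\ref{theoremLP} has only simple real zeros, whereas Theorem \ref{lemmaLP} must apply to even $\mathcal{LP}$ functions with multiple zeros --- for instance $\bJ_{a-1/2}^2(z/2)$, which satisfies \eqref{typeLP}--\eqref{condLP} and is precisely the input used later for the Type 2 transference results. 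So no choice of $\nu$, $f$ can make your first step work in the generality required; the decay $\gamma_m=O(1/m!)$ only controls the order of $G$, not the solvability (let alone positivity) of the associated moment problem. The ``cleaner route'' does not escape this, since it still funnels through Theorem \ref{theoremLP}; moreover the identity you propose has the wrong structure: integrating by parts gives
\begin{equation*}
\Bigl(z\tfrac{d}{dz}+\alpha\Bigr)\bH_\nu(f)(z)=f(1)\,\bJ_\nu(z)+\bH_\nu\!\bigl((\alpha-\nu-\tfrac32)f - t f'\bigr)(z),
\end{equation*}
so the induced density involves $-tf'(t)$ and a boundary term, and its positivity is not inherited from that of $f$. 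Finally, even granting a representation, your ``comparison of interlacing patterns'' is incomplete: knowing that $G_\alpha$ has one zero in each gap $(\zeta_m,\zeta_{m+1})$ and that the $\zeta_m$ interlace the $j_{\mu,m}$ does not pin down whether that zero falls before or after the $j_{\mu,m+1}$ lying in the same gap, so the alternation of $\{G_\alpha(j_{\mu,m})\}$ does not follow.

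The salvageable part of your proposal is the logarithmic-derivative computation, which is the paper's actual engine: from the Hadamard factorization $G(z)=\gamma_0\prod_m(1-z^2/\zeta_m^2)^{\ell_m}$ one gets
\begin{equation*}
\frac{G_\alpha(z)}{zG(z)}=\frac{\alpha}{z}+\sum_{m=1}^\infty \ell_m\Bigl(\frac{1}{z-\zeta_m}+\frac{1}{z+\zeta_m}\Bigr),
\end{equation*}
a partial fraction expansion with positive coefficients. The missing idea is that this expansion, fed into the Hurwitz--P\'olya theorem (Theorem \ref{lemmaP} together with Remark \ref{remarkHP}, which allows zeros to land on the $\zeta_m$), shows $G_\alpha$ has \emph{no non-real zeros} --- something your monotonicity argument on the positive real axis cannot deliver, since it only locates real zeros in the gaps. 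The paper then controls multiplicities (a zero at $\zeta_m$ occurs only if $\ell_m\ge 2$, with multiplicity $\ell_m-1$; elsewhere simplicity follows from the positivity of the Wronskian $W[G_\alpha,G]$ derived from the same expansion), checks $\sum_m \sigma_m^{-2}<\infty$, and concludes by Hadamard's theorem. Reworking your second paragraph along these lines, and discarding the Hankel-representation reduction entirely, yields a correct proof.
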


\begin{proof} Since the case $\,\alpha =0\,$ is known, we shall only prove the
case $\,\alpha>0\,$ although the proof for the case $\,\alpha =0\,$ is not much different.

The hypothesis \eqref{condLP} implies that $G(z)$ has order not exceeding one.
Let $\big(\zeta_m\big)$ denote the sequence of all distinct positive zeros of $G(z)$,
arranged in ascending order of magnitude,
and $\ell_m$ the multiplicity of $\zeta_m$. Since $G(z)$ does not vanish at the origin, the order
restriction implies
\begin{equation}\label{LPC1}
G(z) = \gamma_0 \prod_{m=1}^\infty \left(1-\frac{z^2}{\zeta_m^2}\right)^{\ell_m},
\quad\text{where}\quad \sum_{m=1}^\infty \frac{\ell_m}{\,\zeta_m^2\,}<\infty.
\end{equation}

As readily verified, $G_\alpha(z)$ is an even entire function with
$$G_\alpha(z) = \sum_{m=0}^\infty \frac{\,(-1)^m\gamma_m\,(2m+\alpha)\,}{m!}\,z^{2m}.$$
We note that $G_\alpha(z)$ also has order not exceeding one because the term $\,\log |2m+\alpha|\,$
does not give any contribution in calculating order.
Concerning the zeros of $G_\alpha(z)$, we first observe that $\,G_\alpha(\zeta_m) =0\,$
only when $\,\ell_m\ge 2.$ Since the $j$th derivative of $G_\alpha(z)$ is given by
$$G_\alpha^{(j)}(z) = z G^{(j+1)}(z) + (\alpha+j) G^{(j)}(z),\quad j=1,2,\cdots,$$
it is evident that $\zeta_m$ has multiplicity $\ell_m -1$ in such a case.

By taking the logarithmic derivative of $z^{\alpha}G(z)$
with the representation of \eqref{LPC1} and simplifying, we obtain
\begin{equation}\label{LPC2}
    \frac{\,G_\alpha(z)\,}{z G(z)}
    =\frac{\alpha}{z}+ \sum_{m=1}^\infty\ell_m \left(\frac{1}{z-\zeta_m}+\frac{1}{z+\zeta_m}\right).
\end{equation}
Since the coefficients are all positive, according to Remark \ref{remarkHP} after
the proof of Theorem \ref{lemmaP}, this partial fraction expansion indicates that
$G_\alpha(z)$ has only an infinity of real zeros and each of the intervals
\begin{equation*}
\big(0, \,\zeta_1\big],\,\,\big(\zeta_m,\,\zeta_{m+1}\big],\quad m=1,2,\cdots,
\end{equation*}
contains at most two zeros. In addition, \eqref{LPC2} gives
\begin{equation*}
W\big[G_\alpha, G\big](x) = 4x [G(x)]^2\,\sum_{m=1}^\infty \frac{\ell_m \zeta_m^2}{\,(x^2 -\zeta_m^2)^2\,}>0
\end{equation*}
for all $\,x>0,\,x\ne \zeta_m,\,m=1,2,\cdots,$ and thus
each positive zero of $G_\alpha(z)$ is simple if it does not
coincides with some $\zeta_m$.

Let $\big(\sigma_m\big)$ denote the
sequence of all positive zeros of $G_\alpha(z)$, repeated
up to multiplicities and arranged in the manner
$\,0<\sigma_1\le\sigma_2\le\cdots.$
Due to its nature explained as above, it is evident that
\begin{equation}\label{LPC3}
\sum_{m=1}^\infty \frac{1}{\,\sigma_m^2\,}
\le \frac{1}{\,\sigma_1^2} + \sum_{m=1}^\infty \frac{\ell_m}{\,\zeta_{m}^2\,}<\infty.
\end{equation}
By applying the Hadamard theorem, we conclude that $\,G_\alpha\in\mathcal{LP}\,$ with
\begin{equation*}
G_\alpha(z) = \alpha \gamma_0 \prod_{m=1}^\infty \left(1-\frac{z^2}{\sigma_m^2}\right).
\end{equation*}
\end{proof}

\begin{remark}\label{remarkLP}
In dealing with even entire functions of $\mathcal{LP}$,
it is customary and often advantageous to consider the subclass $\,\mathcal{LP}^+\subset \mathcal{LP},$
which consists of all real entire functions $g(z)$ representable in the form
\begin{equation}\label{LP2}
g(z) = A z^\ell e^{\beta  z} \prod_{m=1}^\omega \left(1+\frac{z}{\tau_m}
\right),\quad 0\le\omega\le \infty,
\end{equation}
where $\,\beta\ge 0,\,A\in\R,$ $\ell$ is a nonnegative integer, and $\big(\tau_m\big)$
is a sequence of positive reals satisfying $\,\sum_{m=1}^\omega 1/\tau_m<\infty.$

As readily observed, $\,g\in \mathcal{LP}^+\,$ if and only if
$\,G\in \mathcal{LP},$ where $G$ is an even extension of $g$ defined by
$\,G(z) = g(-\delta z^2)\,$ for some $\,\delta >0.$ As a consequence,
Theorem \ref{lemmaLP} may be rephrased as follows: If $\,g\in\mathcal{LP}^+\,$ and takes the form
\begin{equation}\label{typeLP2}
g(z) = \sum_{m=0}^\infty \frac{\,\gamma_m\,}{m!}\,z^{m}\qquad(z\in\mathbb{C}),
\end{equation}
where $(\gamma_m)$ is a sequence of non-zero reals satisfying \eqref{condLP},
then
$$\left(z \frac{d}{dz} +\alpha\right) g\in\mathcal{LP}^+ \quad\text{for all}
\quad \alpha\ge 0.$$
\end{remark}

\begin{corollary} \label{corollaryLP}
Under the same assumptions of Theorem \ref{theoremLP}, we have
$$\left(z \frac{d}{dz} +\alpha\right) \bH_\nu(f) \in\mathcal{LP} \quad\text{for all}
\quad \alpha\ge 0.$$
\end{corollary}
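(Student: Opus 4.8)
The plan is to combine Theorem~\ref{theoremLP} with Theorem~\ref{lemmaLP} by verifying that $\bH_\nu(f)$ fits the hypotheses of the latter. First I would recall from Theorem~\ref{theoremLP} that, under the stated assumptions, $\bH_\nu(f)\in\mathcal{LP}$ and, from \eqref{F0}, that it is an even entire function whose power series expansion has the precise form
\begin{equation*}
\bH_\nu(f)(z) = \sum_{m=0}^\infty \frac{(-1)^m\gamma_m}{m!}\,z^{2m},
\qquad \gamma_m = \frac{\beta_m(f)}{\,4^m(\nu+1)_m\,},
\end{equation*}
so that $G(z) := \bH_\nu(f)(z)$ is exactly of the type \eqref{typeLP} appearing in Theorem~\ref{lemmaLP}.

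Next I would check the two structural conditions required there. The reals $\gamma_m$ are all non-zero because $f$ is positive, whence $\beta_m(f)=\int_0^1 t^{2m+\nu+1/2}f(t)\,dt>0$ and $(\nu+1)_m>0$ for every $m$. For the decay condition \eqref{condLP}, note that $\beta_m(f)\le \int_0^1 t^{\nu+1/2}f(t)\,dt = \bH_\nu(f)(0)$ since $0<t<1$, so that
\begin{equation*}
\gamma_m = \frac{\beta_m(f)}{\,4^m(\nu+1)_m\,} \le \frac{\bH_\nu(f)(0)}{\,4^m(\nu+1)_m\,} = O\!\left(\frac{1}{m!}\right)
\end{equation*}
as $m\to\infty$, because $(\nu+1)_m = \Gamma(\nu+1+m)/\Gamma(\nu+1)$ grows like $m!\,m^\nu$ up to constants (indeed $(\nu+1)_m/m!\to\infty$ when $\nu>0$ and is bounded below when $\nu>-1$, and in any case $4^m(\nu+1)_m$ eventually dominates $m!$ for $m$ large; more carefully one uses $(\nu+1)_m\ge c_\nu\,\Gamma(m)$ for $m$ large). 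Thus all hypotheses of Theorem~\ref{lemmaLP} are met with $G=\bH_\nu(f)$.

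Finally I would invoke Theorem~\ref{lemmaLP} directly: since $G=\bH_\nu(f)\in\mathcal{LP}$ is even, of the form \eqref{typeLP}, with coefficients satisfying \eqref{condLP}, it follows that $G_\alpha = \left(z\frac{d}{dz}+\alpha\right)\bH_\nu(f)\in\mathcal{LP}$ for every $\alpha\ge 0$, which is precisely the claim. The only point requiring a little care is the verification of \eqref{condLP}, i.e. that $4^m(\nu+1)_m$ grows at least as fast as $m!$ up to a constant; this is a routine consequence of Stirling's formula applied to $(\nu+1)_m=\Gamma(\nu+1+m)/\Gamma(\nu+1)$, and is in the same spirit as the order computation already carried out in Section~5, so no genuine obstacle arises. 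Everything else is a direct citation of the two theorems.
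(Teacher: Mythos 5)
Your proposal is correct and is exactly the route the paper intends: the corollary is a direct application of Theorem~\ref{lemmaLP} to $G=\bH_\nu(f)$, whose hypotheses (form \eqref{typeLP} with $\gamma_m=\beta_m(f)/\bigl(4^m(\nu+1)_m\bigr)$, non-vanishing by positivity of $f$, and the decay \eqref{condLP} thanks to $\beta_m(f)\le\beta_0(f)$ and the factor $4^m$) you verify carefully. One tiny slip: $(\nu+1)_m/m!$ is \emph{not} bounded below for $-1<\nu<0$ (it decays like $m^{\nu}$), but your fallback argument via $(\nu+1)_m\ge c_\nu\,\Gamma(m)$ together with $4^m$ already closes this, so the proof stands as written.
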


\section{Sturm's method for the sign pattern}
This section focuses on investigating whether the sequence
\begin{align}\label{S0}
\bH_\nu(f)\rb{j_{\mu, m}}&= \int_0^1 t^{\nu+1/2} f(t) \bJ_\nu\rb{j_{\mu, m} t} dt\nonumber\\
&= \frac{1}{j_{\mu, m}^{\nu+3/2}}\int_0^{j_{\mu, m}} f\left(\frac{t}{j_{\mu, m}}\right)
\left[t^{\nu+1/2}\bJ_\nu(t)\right] dt
\end{align}
alternates in sign in the particular case $\mu=\nu$. Our investigation will be based on
a version of Sturm's comparison theorems which states as follows.
(It is practically due to Sturm \cite{S} but we refer to \cite{LM}, \cite{Ma1}, \cite{Wa} for the
present version and further backgrounds with applications.)

\begin{itemize}{\it
\item[{}] For $\,\phi_1, \phi_2\in C([a, b]),$ let $u_1(t), u_2(t)$ be $C^1$ solutions of
$$ u_1'' + \phi_1(t) u_1 =0, \quad u_2'' + \phi_2(t) u_2 =0\quad(a<t<b),$$
respectively, such that $\,u_1(a) = u_2(a) =0\,$ and $\,u_1'(a+) = u_2'(a+) >0.$
If $\,\phi_2(t)>\phi_1(t)\,$ for all $\,a<t<b,$ then $\,u_1(t)>u_2(t)\,$ for all $t$ between
$a$ and the first zero of $u_2(t)$, and hence the first zero of $u_2(t)$ on $(a, b]$ is on the
left of the first zero of $u_1(t)$.}
\end{itemize}

In what follows we shall set $\,j_{\nu, m} \equiv j_m\,$ for each $m$ to simplify notation.

\subsection{The case $\,|\nu|<1/2$}
As readily verified, the function $\,u(t) = t^{\nu+1/2}\bJ_\nu(t),\,t>0,$ satisfies the same differential equation
as defined in \eqref{SL1}, that is,
\begin{equation*}\label{S1}
u'' + \phi(t) u =0,\quad\text{where}\quad \phi(t) = 1+ \frac{(1/4-\nu^2)}{t^2}.
\end{equation*}
In addition, we note that $u(t)$ is strictly positive on $\,\big(0, j_1\big),\,\big(j_{2k},\,j_{2k+1}\big)\,$
and strictly negative on $\,\big(j_{2k-1},\,j_{2k}\big)\,$ for each $\,k=1,2,\cdots.$

For a fixed $k$, if we consider the functions $u_1(t), u_2(t)$ defined by
$$u_1(t) = u\left(j_{2k} +t\right),\quad u_2(t) = -u\left(j_{2k} -t\right),$$
it is readily verified that $u_1(t), u_2(t)$ satisfy
\begin{align*}
u_1'' + \phi\left(j_{2k} +t\right)& u_1 =0, \quad u_2'' + \phi\left(j_{2k} -t\right)u_2 =0,\\
u_1(0) &= u_2(0) =0,\\
u_1'(0) = u_2'(0) &= -\frac{j_{2k}^{\nu+3/2}}{2(\nu+1)} \bJ_{\nu+1}
\rb{j_{2k}}>0.
\end{align*}
Since $\phi(t)$ is strictly decreasing for $\,t>0\,$ when $\,|\nu|<1/2,$
it follows from the aforementioned Sturm's theorem that
$\,j_{2k+1}-j_{2k}>j_{2k} - j_{2k-1}\,$ and $\,u_1(t)> u_2(t)\,$ for $\,0<t< j_{2k}- j_{2k-1},$
which implies in turn that
$$g\rb{j_{2k} +t} u_1(t) >g\rb{j_{2k}-t} u_2(t),\quad 0<t< j_{2k}- j_{2k-1},$$
for any positive function $g(t)$ increasing on the interval $\,\big[j_{2k-1},\,
j_{2k+1}\big].$

As a consequence, we find by integrating that
\begin{align*}
\int_0^{j_{2k+1}-j_{2k}}g(j_{2k}+t)u_1(t)dt &=\left(\int_0^{j_{2k}-j_{2k-1}} +\int_{j_{2k}-j_{2k-1}}^{j_{2k+1}-j_{2k}}\right)(\cdots)\,dt\\
&> \int_0^{j_{2k}-j_{2k-1}}g(j_{2k}-t)u_2(t)dt.
\end{align*}
On changing variables appropriately, the last inequality leads to
\begin{equation*}
\int_{j_{2k}}^{j_{2k+1}}g(t)u(t)dt> -\int_{j_{2k-1}}^{j_{2k}}g(t)u(t)dt,
\end{equation*}
which implies by the additivity of integrals the inequality
\begin{equation}\label{S2}
    \int_{j_{2k-1}}^{j_{2k+1}}g(t)u(t) dt>0.
\end{equation}

In the same manner, if we replace $u_1(t), u_2(t)$ by the functions
$$\widetilde{u_1}(t) = -u\left(j_{2k+1} +t\right),\quad \widetilde{u_2}(t) = u\left(j_{2k+1} -t\right)$$
and proceeds as above, it is not difficult to deduce the inequality
\begin{equation}\label{S3}
    \int_{j_{2k}}^{j_{2k+2}}g(t)u(t) dt<0.
\end{equation}
By setting $\,j_0 =0,$ this inequality continues to be valid for $\,k=0.$

Let $f(t)$ be positive, increasing and integrable for $\,0<t<1.$
By applying \eqref{S2} with $\,g(t) = f(t/j_{2m+1}),$ we find that
\begin{align*}
\int_0^{j_{2m+1}} f\left(\frac{t}{j_{2m+1}}\right)
\left[t^{\nu+1/2}\bJ_\nu(t)\right] dt =\left(\int_0^{j_1} +\sum_{k=1}^m\int_{j_{2k-1}}^{j_{2k+1}} \right)
(\cdots) \,dt>0.
\end{align*}
Similarly, by applying \eqref{S3} with $\,g(t) = f(t/j_{2m}),$ we find that
\begin{align*}
\int_0^{j_{2m}} f\left(\frac{t}{j_{2m}}\right)
\left[t^{\nu+1/2}\bJ_\nu(t)\right] dt =\sum_{k=1}^m\int_{j_{2k-2}}^{j_{2k}}
(\cdots) \,dt<0.
\end{align*}

In view of \eqref{S0}, hence, we have proved the following sign pattern.

\begin{lemma}\label{lemmaS1}
Let $\,|\nu|<1/2.$ If $f(t)$ is positive, increasing and integrable for $\,0<t<1,$ then
\begin{equation*}
    {\rm{sgn}}\,\big[\bH_\nu(f)\left(j_{\nu,m}\right)\big] =(-1)^{m+1},\quad m=1, 2, \cdots.
\end{equation*}
\end{lemma}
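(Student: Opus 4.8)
The plan is to read the stated sign pattern off directly from the block inequalities \eqref{S2} and \eqref{S3}, combined with the rescaled integral representation in \eqref{S0}. Specializing \eqref{S0} to $\mu=\nu$ and writing $u(t)=t^{\nu+1/2}\bJ_\nu(t)$ as in \eqref{S1}, one has
$$\bH_\nu(f)(j_m)=\frac{1}{j_m^{\nu+3/2}}\int_0^{j_m} f\!\left(\frac{t}{j_m}\right)u(t)\,dt\qquad(m=1,2,\cdots),$$
so $\mathrm{sgn}\,[\bH_\nu(f)(j_m)]$ coincides with the sign of $\int_0^{j_m} f(t/j_m)\,u(t)\,dt$, and it suffices to analyze this integral according to the parity of $m$. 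In either case the weight $g(t)=f(t/j_m)$ is positive, and it is increasing on $[0,j_m)$ because $t\mapsto t/j_m$ is increasing with values in $(0,1)$ there while $f$ is increasing on $(0,1)$; in particular $g$ is increasing on every subinterval of $[0,j_m)$ of the form $[j_{k-1},j_{k+1}]$.

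For odd $m=2\ell+1$ I would split
$$\int_0^{j_{2\ell+1}} f\!\left(\frac{t}{j_{2\ell+1}}\right)u(t)\,dt=\int_0^{j_1}(\cdots)\,dt+\sum_{k=1}^{\ell}\int_{j_{2k-1}}^{j_{2k+1}}(\cdots)\,dt ;$$
the first summand is positive since $u>0$ and $g>0$ on $(0,j_1)$, and every remaining summand is positive by \eqref{S2}, so $\bH_\nu(f)(j_{2\ell+1})>0$, i.e. its sign is $(-1)^{(2\ell+1)+1}=+1$. For even $m=2\ell$ I would instead split
$$\int_0^{j_{2\ell}} f\!\left(\frac{t}{j_{2\ell}}\right)u(t)\,dt=\sum_{k=1}^{\ell}\int_{j_{2k-2}}^{j_{2k}}(\cdots)\,dt ,$$
with the convention $j_0=0$, and invoke \eqref{S3} — which was arranged to remain valid on the initial block $[0,j_2]$ — to conclude that every summand is negative, so $\bH_\nu(f)(j_{2\ell})<0$, i.e. its sign is $(-1)^{2\ell+1}=-1$. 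Combining the two parities gives $\mathrm{sgn}\,[\bH_\nu(f)(j_{\nu,m})]=(-1)^{m+1}$ for all $m$.

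There is essentially no obstacle at this final step: all the analytic substance — the Liouville normal form \eqref{S1}, the strict monotonicity of the potential $\phi$ (which is precisely where the hypothesis $|\nu|<1/2$ is used), the Sturm comparison, and the block inequalities \eqref{S2} and \eqref{S3} — has already been carried out, so the lemma reduces to the elementary observation that $g(t)=f(t/j_m)$ is increasing on the relevant intervals together with a finite telescoping of those inequalities over the intervals $[j_{2k-1},j_{2k+1}]$ (odd case) or $[j_{2k-2},j_{2k}]$ (even case).
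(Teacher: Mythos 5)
Your proposal is correct and follows the paper's own argument essentially verbatim: the lemma is obtained by combining the rescaled representation \eqref{S0} with the block inequalities \eqref{S2} and \eqref{S3} (the latter extended to $k=0$ via $j_0=0$), splitting the integral over $(0,j_{2\ell+1})$ as $\int_0^{j_1}+\sum_k\int_{j_{2k-1}}^{j_{2k+1}}$ in the odd case and over $(0,j_{2\ell})$ as $\sum_k\int_{j_{2k-2}}^{j_{2k}}$ in the even case, with $g(t)=f(t/j_m)$ positive and increasing. No gaps; this matches the paper's proof.
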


\begin{remark}\label{remarkPZ}
When $\,|\nu| = 1/2$, it is shown by P\'olya \cite{P} that if $f(t)$ is positive, increasing and integrable, then
the same sign pattern holds true unless $f$ belongs to the so-called exceptional case,
the class of all step functions on $[0, 1]$ having finitely many jump discontinuities
at rational points.
\end{remark}

\subsection{The case $\,|\nu|>1/2$}
We shall modify Makai \cite{Ma2} by considering the function
\begin{equation*}
    w(t) = t^{\frac 12 \left( 1+ \frac{\nu}{|\nu|}\right) }\,\bJ_\nu\rb{t^{ \frac{1}{2|\nu|}}} \qquad(t>0),
\end{equation*}
which is easily seen to be a solution of the differential equation
\begin{equation}\label{S4}
w'' + \varphi(t) w =0,\quad\text{where}\quad \varphi(t) = \frac{1}{4 \nu^2}\,t^{\frac{1}{|\nu|} -2} .
\end{equation}
We note that $\varphi(t)$ is strictly decreasing for $\,t>0\,$ in the present case.

Let us use the notation $\,\hj_m := j_{m}^{2|\nu|}\,$ so that $w(t)$ has zeros at
each $\hj_m$. As in the previous case, if we fix a positive integer $k$ and consider
$$w_1(t) = -w\left(\hj_{ 2k+1} +t\right),\quad w_2(t) = w\left(\hj_{ 2k+1} -t\right),$$
then it is straightforward to calculate, by using \eqref{S4}, that
\begin{align*}
w_1'' + \varphi\left(\hj_{ 2k+1} +t\right)& w_1 =0, \quad w_2'' + \varphi\left(\hj_{ 2k+1} -t\right)w_2 =0,\\
w_1(0) &= w_2(0) =0,\\
w_1'(0) = w_2'(0) &= -\frac{j_{ 2k+1}^{2+\nu-|\nu|}}{\,4|\nu(\nu+1)|\,} \,\bJ_{\nu+1}
\rb{j_{2k+1}}>0,
\end{align*}
which implies by Sturm's theorem that
$\,\hj_{2k+2}-\hj_{2k+1}>\hj_{2k+1} - \hj_{2k}\,$ and also $\,w_1(t)> w_2(t)\,$ for $\,0<t< \hj_{2k+1}- \hj_{2k}.$

Proceeding as before, we find that
\begin{equation}\label{S5}
    \int_{\hj_{2k}}^{\hj_{2k+2}}g(t)w(t) dt<0,\quad k=1, 2, \cdots,
\end{equation}
for any positive function $g(t)$ increasing on $\,\big[\hj_{2k},\,
\hj_{2k+2}\big].$ Set $\,\hj_0 =0$. In the case $\nu>1/2$, it is easy to see that \eqref{S5}
remains valid for $\,k=0.$  In the case $\,-1<\nu <-1/2$, we use the fact
$\,w_1(t)>w_2(t)\,$ for $0<t<\hj_1$ to observe
\begin{equation*}
    \int_{0}^{2\hj_1}g(t)w(t) dt <0.
\end{equation*}
Since $\,w(t)<0\,$ for $\,2\hj_1<t<\hj_2,$ it is now evident that
\begin{equation*}
    \int_{0}^{\hj_2}g(t)w(t) dt <0,
\end{equation*}
whence  \eqref{S5} remains valid for $\,k=0.$ In a like manner, we have
\begin{equation}\label{S6}
    \int_{\hj_{2k-1}}^{\hj_{2k+1}}g(t)w(t) dt>0, \quad k=1, 2, \cdots.
\end{equation}

If $f(t)$ is a positive function for $\,0<t<1\,$ and $\,x>0,$ then
\begin{align*}
&\int_0^{x} f\left(\frac{t}{x}\right)
\left[t^{\nu+1/2}\bJ_\nu(t)\right] dt\\
&\qquad = \frac{1}{2|\nu|} \int_0^{x^{2|\nu|}} t^{(3/2-3|\nu|)/2|\nu|} f\left(\frac{t^{1/2|\nu|}}{x}\right) \,w(t) dt
\end{align*}
whenever the integral converges. We note that the function
$$h(t, x) = t^{(3/2-3|\nu|)/2|\nu|} f\left(\frac{t^{1/2|\nu|}}{x}\right),\quad 0<t<x^{2|\nu|},$$
is increasing in $t$ if and only if $\,t^{3/2-3|\nu|} f(t)\,$ is increasing for $\,0<t<1.$
By applying \eqref{S5}, together with the additional case $\,k=0,$ and \eqref{S6} with
$\,g(t) = h\rb{t, j_{2m+1}}\,$ or $\,h\rb{t, j_{2m}},$ we find that
\begin{align*}
&\int_0^{j_{2m+1}} f\left(\frac{t}{j_{2m+1}}\right)
\left[t^{\nu+1/2}\bJ_\nu(t)\right] dt >0,\\
&\quad \int_0^{j_{2m}} f\left(\frac{t}{j_{2m}}\right)
\left[t^{\nu+1/2}\bJ_\nu(t)\right] dt <0, \quad m=1, 2, \cdots,
\end{align*}
provided that $\,t^{3/2-3|\nu|} f(t)\,$ is positive and increasing
for $\,0<t<1.$

In summary, it is proved that the following sign pattern holds true.

\begin{lemma}\label{lemmaS2}
Let $\,|\nu|>1/2\,$ and $f(t)$ be a positive integrable function satisfying \eqref{IA} such that $\,t^{3/2-3|\nu|} f(t)\,$ is increasing
for $\,0<t<1,$ then
\begin{equation*}
    {\rm{sgn}}\,\big[\bH_\nu(f)\left(j_{\nu,m}\right)\big] =(-1)^{m+1},\quad m=1, 2, \cdots.
\end{equation*}
\end{lemma}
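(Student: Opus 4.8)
The plan is to replay the Sturm-comparison argument that produced Lemma~\ref{lemmaS1} in the range $|\nu|<1/2$, but after a change of variable that repairs the monotonicity of the relevant potential; this is the device of Makai~\cite{Ma2}. The obstruction is that $u(t)=t^{\nu+1/2}\bJ_\nu(t)$ solves the Liouville normal form \eqref{S1} with $\phi(t)=1-(\nu^2-1/4)/t^2$, which is strictly \emph{decreasing} only when $\nu^2<1/4$; for $|\nu|>1/2$ it is increasing, so comparing consecutive arches of $\bJ_\nu$ would point the inequalities the wrong way. Instead I would substitute $t=s^{1/(2|\nu|)}$ and set $w(s)=s^{\frac12(1+\nu/|\nu|)}\bJ_\nu\!\big(s^{1/(2|\nu|)}\big)$; a direct computation from Bessel's equation \eqref{BJ} shows $w''+\varphi(s)\,w=0$ with $\varphi(s)=\frac{1}{4\nu^2}\,s^{1/|\nu|-2}$, and $1/|\nu|-2<0$ exactly because $|\nu|>1/2$, so $\varphi$ is strictly decreasing on $(0,\infty)$. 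The zeros of $w$ are precisely $\hj_m=j_{\nu,m}^{\,2|\nu|}$.

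With $\varphi$ decreasing the arch geometry carries over. Fixing $k\ge 1$ and putting $w_1(t)=-w(\hj_{2k+1}+t)$, $w_2(t)=w(\hj_{2k+1}-t)$, I would check from \eqref{S4} that $w_1,w_2$ solve the shifted equations, vanish at $0$, and share a common positive initial slope (which one reads off from the ODE using $\bJ_\nu'=-\tfrac{z}{2(\nu+1)}\bJ_{\nu+1}$ and the interlacing $j_{\nu,1}<j_{\nu+1,1}<j_{\nu,2}<\cdots$); since $\varphi(\hj_{2k+1}-t)>\varphi(\hj_{2k+1}+t)$ for $t>0$, the quoted Sturm theorem gives $w_1(t)>w_2(t)$ on $(0,\hj_{2k+1}-\hj_{2k})$ together with the spacing inequality $\hj_{2k+2}-\hj_{2k+1}>\hj_{2k+1}-\hj_{2k}$. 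Multiplying by an arbitrary positive $g$ increasing on $[\hj_{2k},\hj_{2k+2}]$, integrating, splitting the domain at $\hj_{2k+1}-\hj_{2k}$ and changing variables back produces $\int_{\hj_{2k}}^{\hj_{2k+2}}g\,w<0$, i.e.\ \eqref{S5}; the mirror-image shifts centred at $\hj_{2k}$ give $\int_{\hj_{2k-1}}^{\hj_{2k+1}}g\,w>0$, i.e.\ \eqref{S6}. The delicate point is the boundary block $k=0$, where there is no zero $\hj_0$ to serve as a left endpoint: for $\nu>1/2$ one checks $w>0$ near the origin and the argument runs verbatim with the convention $\hj_0:=0$, whereas for $-1<\nu<-1/2$ one first extracts $\int_0^{2\hj_1}g\,w<0$ from $w_1>w_2$ on $(0,\hj_1)$ and then appends the contribution of $(2\hj_1,\hj_2)$, on which $w<0$, to recover \eqref{S5} at $k=0$.

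It remains to transport these inequalities to $\bH_\nu(f)(j_{\nu,m})$. Starting from the scaled representation in \eqref{S0} and substituting $t=s^{1/(2|\nu|)}$,
$$\int_0^{j_m}f(t/j_m)\,t^{\nu+1/2}\bJ_\nu(t)\,dt=\frac{1}{2|\nu|}\int_0^{\hj_m}h(s,j_m)\,w(s)\,ds,\qquad h(s,x)=s^{(3/2-3|\nu|)/(2|\nu|)}\,f\!\big(s^{1/(2|\nu|)}/x\big);$$
writing $u=s^{1/(2|\nu|)}/x$ one gets $h=x^{3/2-3|\nu|}\,u^{3/2-3|\nu|}f(u)$, so $s\mapsto h(s,x)$ is increasing on $(0,x^{2|\nu|})$ exactly when $t\mapsto t^{3/2-3|\nu|}f(t)$ is increasing on $(0,1)$ --- which is the hypothesis. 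Decomposing $(0,\hj_{2m})=\bigcup_{k=0}^{m-1}(\hj_{2k},\hj_{2k+2})$ and $(0,\hj_{2m+1})=(0,\hj_1)\cup\bigcup_{k=1}^{m}(\hj_{2k-1},\hj_{2k+1})$, applying \eqref{S5} (with the $k=0$ case) and \eqref{S6} to $g=h(\cdot,j_{2m})$ resp.\ $g=h(\cdot,j_{2m+1})$, and using $w>0$ on the initial arch $(0,\hj_1)$, yields $\bH_\nu(f)(j_{2m})<0$ and $\bH_\nu(f)(j_{2m+1})>0$ for all $m$, which is precisely $\mathrm{sgn}\,[\bH_\nu(f)(j_{\nu,m})]=(-1)^{m+1}$. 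I expect the main obstacle to be the boundary block $k=0$: there the behaviour of $w$ at the origin, and hence the argument itself, bifurcates according to the sign of $\nu$, and one must also verify that \eqref{IA} together with the monotonicity hypothesis makes every integral and substitution above legitimate.
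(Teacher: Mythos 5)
Your proposal is correct and follows essentially the same route as the paper: the Makai substitution $w(s)=s^{\frac12(1+\nu/|\nu|)}\bJ_\nu\big(s^{1/(2|\nu|)}\big)$ with decreasing potential $\varphi(s)=\tfrac{1}{4\nu^2}s^{1/|\nu|-2}$, the Sturm comparison on consecutive arches yielding \eqref{S5}--\eqref{S6}, the sign-of-$\nu$ bifurcation for the $k=0$ block (including the $\int_0^{2\hj_1}gw<0$ device for $-1<\nu<-1/2$), and the change of variables showing monotonicity of $h(\cdot,x)$ is equivalent to that of $t^{3/2-3|\nu|}f(t)$. No gaps to report.
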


\begin{remark}
As it may be expected, the sufficient conditions for the sign changes presented in
both Lemmas \ref{lemmaS1}, \ref{lemmaS2} are far from being optimal.
For example, if we take $\,f(t)= 2(\nu+1) \,t^{\nu+1/2},$ a special case of  \eqref{EX1},
then its Hankel transform
$\,\bH_\nu(f)(z) = \bJ_{\nu+1}(z)\,$ is subject to the sign pattern
\begin{equation*}
    {\rm{sgn}}\,\big[\bJ_{\nu+1} \left(j_{\nu,m}\right)\big] =(-1)^{m+1}\quad
    \text{for all $m$ and $\,\nu>-1.$}
\end{equation*}
Nevertheless, both lemmas indicate that this sign pattern holds true only in the range
$\,-1<\nu\le 1,\,\nu \ne -1/2.$
\end{remark}

On combining the above lemmas with Theorems \ref{theoremZ1} and \ref{theoremLP},
we obtain the following theorem which constitutes one of our main results.

\begin{theorem}\label{theoremZ2} For $\,\nu>-1,$ suppose that $f(t)$ is a positive integrable function
 satisfying \eqref{IA} and the following case assumptions:
 \begin{itemize}
 \item[\rm(i)] $f(t)$ is increasing for $\,0<t<1\,$ when $\,|\nu|\le 1/2\,$  and it does not belong to the exceptional case
when $\,|\nu| =1/2.$
 \item[\rm(ii)] The function $\,t^{3/2-3|\nu|} f(t)\,$ is increasing for $\,0<t<1\,$ when $\,|\nu|>1/2.$
 \end{itemize}
 Then $\,\bH_\nu(f)\in\mathcal{LP}\,$ with an infinity of real simple zeros. Moreover, $\bH_\nu(f)(z)$
 has one and only one positive zero in each of the intervals
 $$\,\big(j_{\nu, m}, \, j_{\nu, m+1}\big),\quad m=1, 2, \cdots,$$
 and no positive zeros elsewhere.
\end{theorem}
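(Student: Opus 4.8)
The plan is to combine the three preceding results into Theorem~\ref{theoremZ2} by verifying their hypotheses in each of the two cases. The key observation is that Theorem~\ref{theoremZ1} (with $\mu=\nu$) and Theorem~\ref{theoremLP} both require exactly the same input: that the sequence $\big\{\bH_\nu(f)\rb{j_{\nu, m}}\big\}$ alternates in sign, and more precisely that $\sigma_m = (-1)^{m+1}\bH_\nu(f)\rb{j_{\nu, m}}$ keeps constant sign. So the entire proof reduces to establishing this sign pattern, after which one simply quotes Theorem~\ref{theoremLP} for the membership $\bH_\nu(f)\in\mathcal{LP}$ (together with the product representation \eqref{F3}), and Theorem~\ref{theoremZ1}(i) for the simplicity of the zeros and their localization in the intervals $\big(j_{\nu,m}, j_{\nu,m+1}\big)$ with none elsewhere.

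First I would dispose of the sign pattern by appealing to the two lemmas of Section~6. In case~(i), if $|\nu|<1/2$, Lemma~\ref{lemmaS1} directly gives ${\rm sgn}\,\big[\bH_\nu(f)\left(j_{\nu,m}\right)\big] = (-1)^{m+1}$ for a positive increasing integrable $f$; if $|\nu|=1/2$, the same conclusion follows from P\'olya's result recorded in Remark~\ref{remarkPZ}, using the hypothesis that $f$ is not in the exceptional case. In case~(ii), where $|\nu|>1/2$, Lemma~\ref{lemmaS2} applies verbatim under the stated assumption that $t^{3/2-3|\nu|}f(t)$ is increasing on $(0,1)$, again yielding ${\rm sgn}\,\big[\bH_\nu(f)\left(j_{\nu,m}\right)\big] = (-1)^{m+1}$. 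In all cases, therefore, $\sigma_m > 0$ for every $m$, which is precisely the hypothesis of part~(i) in both Theorem~\ref{theoremZ1} and (in the alternating form) Theorem~\ref{theoremLP}.

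With the sign pattern in hand, the remainder is immediate. Theorem~\ref{theoremLP}, applied with $\mu=\nu$, tells us $\bH_\nu(f)\in\mathcal{LP}$ and that it has only an infinity of real simple zeros with the factorization \eqref{F3}; this establishes the first sentence of the conclusion. Theorem~\ref{theoremZ1}(i), again with $\mu=\nu$ and noting that $\sigma_m>0$, tells us that the positive zeros are distributed one per interval $\big(j_{\nu,m}, j_{\nu,m+1}\big)$ for $m=1,2,\dots$, with no positive zeros elsewhere — in particular none in $\big(0, j_{\nu,1}\big)$, as befits case~(i) rather than case~(ii). This is exactly the ``moreover'' clause. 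The only point requiring a word of care is the bookkeeping of which sub-hypothesis of Theorem~\ref{theoremZ1} is triggered: since $\sigma_m>0$ (not $<0$), it is part~(i), and the interval $\big(0, j_{\nu,1}\big)$ is correctly excluded.

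I expect no genuine obstacle here: the theorem is a synthesis, and all the analytic work — the partial fraction expansion, the Hurwitz--P\'olya localization argument, the Hadamard factorization, and the Sturm comparison estimates — has already been carried out in the earlier sections. The one place demanding mild attention is simply confirming that the integrability condition \eqref{IA} is available in each sub-case so that Theorem~\ref{theoremP1} and its corollaries (hence \eqref{Paf3}, \eqref{W1}) apply; but \eqref{IA} is assumed outright in the statement, so this is automatic. Thus the proof is essentially a three-line citation of lemmas \ref{lemmaS1}, \ref{lemmaS2} (and Remark~\ref{remarkPZ}) followed by theorems \ref{theoremZ1} and \ref{theoremLP}.
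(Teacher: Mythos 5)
Your proposal is correct and follows exactly the paper's route: the paper proves Theorem \ref{theoremZ2} precisely by combining the sign-pattern lemmas \ref{lemmaS1}, \ref{lemmaS2} (with Remark \ref{remarkPZ} for $|\nu|=1/2$) with Theorems \ref{theoremZ1} and \ref{theoremLP} in the case $\mu=\nu$. Your bookkeeping that $\sigma_m>0$ triggers part (i) of Theorem \ref{theoremZ1}, excluding a zero in $\bigl(0, j_{\nu,1}\bigr)$, matches the intended argument.
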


While we shall present applications in the next section,
we refer to \cite{P} for improved bounds of zeros when $\,|\nu| =1/2\,$ and \cite{E}, \cite{Wa} for
a number of tabulated Hankel transforms for which
Theorem \ref{theoremZ2} may be applicable.

\section{${}_1F_2$ hypergeometric functions}
In application of the results established in this paper, this section
aims to investigate the zeros of ${}_1F_2$ hypergeometric functions of the form
\begin{equation}\label{HG1}
\Phi(z) = {}_1F_2 \left[\begin{array}{c}
a\\ b, \,c\end{array}\biggr| - \frac{\,z^2}{4} \right]\qquad(z\in\mathbb{C}),
\end{equation}
where $a, b, c$ are real numbers subject to the condition
neither of $b, c$ coincides with a non-positive integer.  We note that $\Phi(z)$
is an even real entire function of order one with $\,\Phi(0)=1.$
Of our main concern is to determine the set of parameters for which
$\Phi(z)$ belongs to the Laguerre-P\'olya class $\mathcal{LP}.$

In consideration of the corresponding entire function
\begin{equation}\label{HG2}
\phi(z) = {}_1F_2 \left[\begin{array}{c}
a\\ b, \,c\end{array}\biggr|  z\right]\qquad(z\in\mathbb{C}),
\end{equation}
related by $\,\Phi(z) =\phi(-z^2/4),$ it should be emphasized that \emph{any condition on parameters $a, b, c$ for
$\,\Phi\in \mathcal{LP}\,$ is also sufficient for $\,\phi\in\mathcal{LP}^+$, without altering any material, }
due to the relationship between $\mathcal{LP}$ and $\mathcal{LP}^+$ as described in Remark \ref{remarkLP}.
Therefore all of our subsequent results are relevant to the open problem raised by Sokal \cite{So}
concerning $\,\phi\in\mathcal{LP}^+$.

\subsection{Positivity and zeros}
As to the existence of zeros, the following results have been established in our recent work
\cite[Theorem 4.2]{CCY} on the basis of Gasper's sums of squares method \cite{Ga} and the known asymptotic
behavior of $\Phi(z)$.

\begin{proposition}\label{propositionP1}
For each $\,a>0,$ define
\begin{align*}
\mathcal{N}_a &= \left\{(b, c): \text{$\,b\le a \,$ or $\,c\le a\,$ or $\,b+c<3a + \frac 12\,$} \right\},\\
\mathcal{P}_a &= \left\{(b, c) : b>a,\,\,c\ge\max\Big[ 3a+ \frac 12 -b,\,\, a+\frac{a}{2(b-a)}\Big]\right\},\\
\mathcal{S}_a  &= \left\{\big(a+1/2,\,2a\big),\,\,\big(2a,\, a+1/2\big)\right\}.
\end{align*}

\begin{itemize}
\item[\rm(i)] If $\,(b, c)\in\mathcal{N}_a,$ then $\Phi(z)$ has at least one positive zero.
\item[\rm(ii)] If $\,(b, c)\in \mathcal{P}_a\setminus\mathcal{S}_a,$ then $\,\Phi(x)>0\,$ for all $\,x>0\,$ and hence $\Phi(z)$ has no real zeros.
If $\,(b, c)\in \mathcal{S}_a,$ then $\Phi(z)$ reduces to
\begin{align*}
{}_1F_2 \left[\begin{array}{c}
a\\ a+1/2, \,2a\end{array}\biggr| - \frac{\,z^2}{4} \right] = \bJ_{a-1/2}^2\left(\frac z2\right),
\end{align*}
which has only real and double zeros at $\,2j_{a-1/2, k},\,k=1,2,\cdots.$
\end{itemize}
\end{proposition}

As shown in Figures \ref{Fig1}, \ref{Fig2} for some special values of $a$,
$\mathcal{P}_a$ represents an infinite hyperbolic region in $\R_+^2$
containing the so-called Newton diagram or polyhedron of $\mathcal{S}_a$
(see \cite{CC2} for the definition and related results).

Proposition \ref{propositionP1} gives rise to the decomposition
$$\mathbb{R}_+^2 = \mathcal{P}_a \cup \mathcal{N}_a\cup \left(\mathcal{P}_a\cup \mathcal{N}_a\right)^c.$$
Regarding the region $\mathcal{P}_a\,$ of parameters, it is simple to observe
\begin{theorem}\label{test1}
For each $\,a>0,$ if $\,(b, c)\in\mathcal{P}_a\setminus\mathcal{S}_a,$ then
$\,\Phi\notin \mathcal{LP}\,$ and $\Phi(z)$ has an infinity of complex zeros all of which are not purely
imaginary.
\end{theorem}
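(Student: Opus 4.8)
The plan is to establish two things: first, that $\Phi(z)$ has only finitely many real zeros (in fact none), and second, that it nevertheless has infinitely many zeros by an order/genus argument, so that by counting at least four must be non-real and non-purely-imaginary. The starting point is result (I) from \cite{CCY}: when $(b,c)\in\mathcal{P}_a\setminus\mathcal{S}_a$ we already know $\Phi(x)>0$ for all real $x$, hence $\Phi$ has \emph{no} real zeros whatsoever. So the whole task reduces to producing complex zeros and controlling their location.

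First I would pin down the order and genus of $\Phi$. Arguing exactly as in Section 5, part~1 (the Stirling/Hadamard computation for $\bH_\nu(f)$), the coefficients of $\Phi(z)={}_1F_2[\,a;b,c\,|-z^2/4]$ are
$$
A_m = \frac{(a)_m}{(b)_m(c)_m\,m!}\Big(\frac{-1}{4}\Big)^m,
$$
and since $(a)_m,(b)_m,(c)_m,m!$ each grow like $\Gamma(\cdot+m)$, one computes $\limsup \frac{2m\log 2m}{-\log|A_m|}=1/2$; thus $\Phi$ is of order $1/2$, mean type or minimal type. Now suppose, for contradiction, that $\Phi$ had only finitely many zeros. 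An entire function of order $1/2$ with finitely many zeros is, by Hadamard factorization, a polynomial times $e^{cz}$; being even and of order $1/2$ it would have to be a polynomial, which contradicts $A_m\neq 0$ for all $m$ (true since $a>0$ forces $(a)_m\neq 0$, and $b,c$ are not non-positive integers). Hence $\Phi$ has infinitely many zeros.

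Next I would combine ``no real zeros'' with ``infinitely many zeros.'' The zeros come in conjugate pairs since $\Phi$ has real coefficients, and in $\pm$-pairs since $\Phi$ is even; so if $z_0$ is a zero with $z_0\notin i\mathbb{R}$, then $\{z_0,\overline{z_0},-z_0,-\overline{z_0}\}$ are four distinct zeros. Thus it suffices to exhibit even \emph{one} zero that is not purely imaginary. Here I would use the known asymptotics of ${}_1F_2$ (the same asymptotic input used in \cite{CCY}): for large $|z|$, $\Phi(z)$ behaves like a constant multiple of $z^{-\rho}\cos(z - \theta)$ plus lower-order and exponentially-growing-off-the-real-axis terms, so along the real axis $\Phi$ oscillates and in particular its zeros accumulate along rays near $\mathbb{R}$, not along the imaginary axis. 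More robustly: we already verified $\Phi(iy)=\sum_m \frac{(a)_m}{(b)_m(c)_m m!}(y^2/4)^m>0$ for all real $y$ whenever $a,b,c>0$ (each term positive), so $\Phi$ has \emph{no} purely imaginary zeros at all under $(b,c)\in\mathcal{P}_a$. Combined with ``no real zeros'' and ``infinitely many zeros,'' every zero is non-real and non-purely-imaginary, and there are infinitely many of them — in particular at least four — which is the assertion, with room to spare.

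The main obstacle is the non-triviality of result (I), i.e.\ knowing $\Phi(x)>0$ on $\mathbb{R}$ for $(b,c)\in\mathcal{P}_a\setminus\mathcal{S}_a$; but this is quoted from \cite[Theorem 4.2]{CCY} and we are entitled to use it. The only genuine work left in the present proof is the order/genus bookkeeping to rule out finitely many zeros, and that is a direct transcription of the Stirling computation already carried out in Section~5 for $\bH_\nu(f)$, together with the elementary observation that an even entire function of order $\le 1/2$ with infinitely many nonzero coefficients cannot be a polynomial. I would therefore present the proof in three short steps: (1) cite (I) to get no real zeros and no purely imaginary zeros; (2) compute order $1/2$ and invoke Hadamard to force infinitely many zeros; (3) use the conjugate/even symmetry to conclude the zeros occur in quadruples off the two axes, hence at least four (indeed infinitely many) complex zeros, none purely imaginary.
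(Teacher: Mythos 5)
Your proposal follows essentially the same route as the paper: quote (I) for $\Phi(x)>0$ on $\R$, use the termwise-positive series $\Phi(iy)>0$ to exclude purely imaginary zeros, use Hadamard factorization plus the growth order and evenness to force zeros to exist, and then the conjugate/even symmetry to produce quadruples $\{z_0,\overline{z_0},-z_0,-\overline{z_0}\}$; the paper phrases the exclusion $\Phi\notin\mathcal{LP}$ by noting that the representation \eqref{LP1} with no real zeros and order one would force $\Phi\equiv 1$, which is the same idea. One slip to correct: as a function of $z$, $\Phi(z)={}_1F_2\bigl[a;b,c\mid -z^2/4\bigr]$ has order \emph{one}, not $1/2$ (the coefficient of $z^{2m}$ has $-\log|A_m|\sim 2m\log m$, so the limsup in the order formula is $1$; the order is $1/2$ only in the variable $w=-z^2/4$). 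This does not derail your argument, because with order one and finitely many zeros Hadamard gives $P(z)e^{cz}$, and the evenness you already invoke (or the positivity of $\Phi(iy)$) forces $c=0$, hence a polynomial, contradicting the non-vanishing of all coefficients; so infinitely many zeros still follow, and with no real and no purely imaginary zeros you get infinitely many (in particular at least four) complex zeros off both axes. You should also state explicitly that the existence of a non-real zero already yields $\Phi\notin\mathcal{LP}$, since members of $\mathcal{LP}$ have only real zeros; this is implicit in your write-up but is part of the assertion being proved.
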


\begin{proof}
As readily calculated, the entire function $\phi(z)$, defined by \eqref{HG2}, has growth order $1/2$ .
Consequently, it has an infinity of zeros under the present setting (see \cite[p. 31]{Le})
and so does the function $\Phi(z)$. Moreover,
$$\Phi(iy) = \sum_{k=0}^\infty \frac{(a)_k}{\,k! (b)_k (c)_k\,}\left(\frac{y^2}{4}\right)^k>0
\quad\text{for all $\,y\in\R\,$}$$
and hence $\Phi(z)$ can not have purely imaginary zeros. For $\,(b, c)\in\mathcal{P}_a\setminus\mathcal{S}_a,$
since $\Phi(z)$ has no real zeros, the conclusion follows.
\end{proof}

\subsection{Images of Hankel transforms}

What matters is the nature of zeros in the case $\,(b, c)\in\mathcal{N}_a.$
If $\, b=a\,$ or $\,c=a,$ then $\Phi(z)$ is equal to $\bJ_{c-1}(z)$ or $\bJ_{b-1}(z)$, respectively.
If one of $b, c$ exceeds $a$, it is possible to
recognize $\Phi(z)$ as a Hankel transform of type \eqref{H2}. By considering symmetry of $\Phi(z)$
with respect to parameters $b, c$, let us assume  $\,b>a.$ By integrating termwise, it is readily verified that
\begin{align*}
\Phi(z) &= \frac{2}{B(a, b-a)} \,\int_0^1 (1-t^2)^{b-a-1} t^{2a-1}\bJ_{c-1}(zt) dt\\
&= \frac{2}{B(a, b-a)}\bH_{c-1}(f)(z),\quad f(t) =(1-t^2)^{b-a-1} t^{2a-c-1/2}.
\end{align*}

Setting $\,g(t) =  t^{3/2-3\nu} f(t),\, h(t) =  t^{3/2+3\nu} f(t),$ with $\,\nu=c-1,$ that is,
\begin{align*}
g(t) = (1-t^2)^{b-a-1} t^{2a-4c +4},\quad h(t) = (1-t^2)^{b-a-1} t^{2a+2c-2},
\end{align*}
it is elementary to observe the following aspects:
\begin{itemize}
\item $f(t)$ is integrable and satisfies \eqref{IA} when $\,c<2a+1/2.$
\item $f(t)$ is increasing for $\,0<t<1\,$ when $\,b\le a+1,\,2a-c-1/2\ge 0\,$;
$g(t)$ is increasing for $\,0<t<1\,$ when $\, b\le a+1,\,a-2c+2\ge 0\,$;
$h(t)$ is increasing for $\,0<t<1\,$ when $\, b\le a+1,\,a+c-1\ge 0.$
\item $f(t)$ is in the exceptional case only when $\,b=a+1,\,c=2a-1/2.$
\end{itemize}

By applying Theorem \ref{theoremZ2} and making use of symmetry, it is a matter of arranging parameters
to obtain the following information, where we exclude the trivial cases $\,b=a\,$ or $\,c=a.$ As before,
all positive zeros of $\Phi(z)$ will be denoted by $\big(\zeta_k\big)$ arranged in ascending order of magnitude.

\begin{theorem}
For each $\,a>1/2,$  let $\mathcal{Z}_a$ denote the set of all ordered-parameter pairs
$\,(b, c)\in\R_+^2\,$ defined by
$\,\mathcal{Z}_a = \big(a, \,a+1\big]\times I_a,$ where
\begin{align*}
I_a =\left\{\begin{aligned}
&{\Big[1-a,\,2a- \frac 12 \Big] \quad\text{if $\,\frac 12 <a<1,$}}\\
&{\Big(0,\,\frac 12 a  +1\Big]\quad\text{if $\,a\ge 1,$}}\end{aligned}\right.
\end{align*}
and $\,\mathcal{Z}_a^* = \left\{ (b, c) :  (c, b)\in \mathcal{Z}_a\right\}.$
If $\,(b, c)\in \big(\mathcal{Z}_a \cup  \mathcal{Z}_a^*\big),$ then $\,\Phi\in \mathcal{LP}\,$
and the positive zeros of $\Phi(z)$ are all simple and satisfy the following:
\begin{align*}
&{\rm{(i)}}\quad\left\{\begin{aligned}
&{j_{c-1, k}<\zeta_k<j_{c-1, k+1}\quad\text{for}\quad (b, c)\in \mathcal{Z}_a,}\\
&{j_{b-1, k}<\zeta_k<j_{b-1, k+1}\quad\text{for}\quad (b, c)\in \mathcal{Z}_a^*,}
\end{aligned}\right.\quad k=1, 2, \cdots.\\
&{\rm{(ii)} }\quad
\sum_{k=1}^\infty \frac{1}{\,\zeta_k^2\,} = \frac{a}{4bc}.
\end{align*}
\end{theorem}

The last explicit sum in (ii) results from formula \eqref{F4} due to
 \begin{align*}
 \beta_k(f) = \int_0^1 t^{2k+c-1/2}f(t) \,dt = \frac{\,(a)_k\,}{(b)_k},\quad k=0, 1, 2,\cdots.
 \end{align*}

\subsection{A transference principle}
In summary, it has been shown so far that $\,\Phi\in\mathcal{LP}\,$ in the case when $\,a>0\,$ and
parameters $b,c$ satisfy one of the following conditions:
\begin{equation*}\label{set1}
(1)\,\text{$\,b=a\,$ or $\,c=a.$}\quad (2)\,\, (b, c)\in\mathcal{S}_a.\quad
(3)\,\,(b, c)\in \big(\mathcal{Z}_a \cup  \mathcal{Z}_a^*\big).
\end{equation*}

As it may be expected from Theorem \ref{lemmaLP}, which asserts that
certain class of $\mathcal{LP}$ is closed under the operation $\,z \cdot d/dz +\alpha,\,\alpha\ge 0,$
it is possible to extend the range of parameters with the aid of the following.

\begin{lemma}\label{lemmaT}
If $\Phi(z)$ belongs to $\mathcal{LP},$ then the function of type
\begin{equation}\label{PS1}
{}_1F_2 \left[\begin{array}{c}
a+m\\ b+n, \,c+\ell\end{array}\biggr| - \frac{\,z^2}{4} \right]
\end{equation}
also belongs to $\mathcal{LP},$ where $m, n, \ell$ are integers subject to the condition
$$ m\ge 0,\quad -b<n\le m,\quad -c<\ell\le m.$$
\end{lemma}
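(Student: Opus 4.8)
The plan is to reduce the general statement, via induction, to three elementary ``unit'' operations on the parameters, each of which either preserves membership in $\mathcal{LP}$ by an already-established closure property or follows from a classical fact about $\mathcal{LP}$. First I would write the ${}_1F_2$ function in \eqref{HG1} as an even entire function $\Phi(z) = \sum_{k=0}^\infty \gamma_k (-z^2/4)^k$ with $\gamma_k = (a)_k/[k!\,(b)_k(c)_k]$, and observe that such $\Phi$ always has order at most one (indeed at most $1/2$ in the $\mathcal{LP}^+$ picture), so that Hadamard factorization and the hypotheses of Theorem \ref{lemmaLP} are available whenever the coefficient-decay condition \eqref{condLP} holds. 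The key point is that raising a single numerator parameter from $a$ to $a+1$ is exactly the operator $z\frac{d}{dz}+\alpha$ applied to $\Phi$ up to normalization: differentiating $\Phi$ termwise and comparing coefficients gives
\begin{equation*}
\left(z\frac{d}{dz} + a\right)\Phi(z) = a\cdot {}_1F_2\!\left[\begin{array}{c} a+1\\ b,\,c\end{array}\biggr|-\frac{z^2}{4}\right].
\end{equation*}
Hence Theorem \ref{lemmaLP} (equivalently Corollary \ref{corollaryLP}) shows that $(a,b,c)\in\mathcal{LP}\Rightarrow(a+1,b,c)\in\mathcal{LP}$, which handles the increment $m\ge 0$ in the numerator one step at a time.

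Next I would treat the two denominator increments. Lowering a denominator parameter by one unit is the reverse operation: from the same coefficient identity, increasing $b$ to $b+1$ (with $b>0$) is obtained from $(a,b+1,c)$ by applying $z\frac{d}{dz}+b$ and renormalizing, so $(a,b+1,c)\in\mathcal{LP}\Rightarrow(a,b,c)\in\mathcal{LP}$ — but this is the wrong direction for us, since the statement \emph{raises} $b$ to $b+n$. The correct reading is that we need $n\le m$: each unit increase of $b$ must be compensated, in the bookkeeping, by having already spent a unit increase of $a$. So I would organize the induction on $m$: given that ${}_1F_2[a,b,c]\in\mathcal{LP}$, I first apply $z\frac{d}{dz}+a$ to reach $[a+1,b,c]\in\mathcal{LP}$; then, whenever I wish to raise a denominator parameter, say $b\to b+1$, I use the \emph{contiguous relation} expressing ${}_1F_2[a+1,b+1,c]$ as a linear combination (with the right signs) of ${}_1F_2[a+1,b,c]$ and its image under a first-order operator of the form $z\frac{d}{dz}+\text{const}$ — this is where the constraint $-b<n$, i.e. $b+n>0$, enters, guaranteeing all the shift constants stay positive. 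Iterating, after $m$ numerator-raisings I have ``budget'' for up to $m$ denominator-raisings in each of $b$ and $c$, which is precisely $n\le m$ and $\ell\le m$; the lower bounds $-b<n$, $-c<\ell$ keep us away from poles of the hypergeometric coefficients and keep the relevant constants in the nonnegative range required by Theorem \ref{lemmaLP}.

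The third ingredient, needed to legitimize applying Theorem \ref{lemmaLP} at every intermediate stage, is the verification of the decay hypothesis \eqref{condLP}: I would check that for any admissible intermediate parameter triple $(a',b',c')$ arising in the induction, the coefficients $(a')_k/[(b')_k(c')_k]$ are $O(1)$ (in fact they behave like $k^{a'-b'-c'}$ up to constants, which is bounded once $b'+c'\ge a'$, and in any case the relevant closure results only require $\gamma_k = O(1/k!)$ after the $1/k!$ is factored out, which holds since $(a')_k/[(b')_k(c')_k]$ grows at most polynomially). This is routine from Stirling's formula applied to the Pochhammer symbols, exactly as in the order computation already carried out in Section 5. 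The main obstacle I anticipate is bookkeeping the signs in the ${}_1F_2$ contiguous relations for the denominator shifts: unlike the numerator shift, which is literally $z\frac{d}{dz}+a$, raising $b$ requires combining two ${}_1F_2$'s and one must verify that the combination has the form $(z\frac{d}{dz}+\alpha)$ applied to a single $\mathcal{LP}$ function with $\alpha\ge 0$, rather than a difference of two such — getting this sign right, uniformly along the induction, is the delicate step, and it is what forces the precise inequalities $n\le m$, $\ell\le m$ in the statement.
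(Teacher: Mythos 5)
Your overall strategy---reduce the statement to unit shifts of the parameters, each implemented by an operator $z\frac{d}{dz}+\alpha$ so that Theorem \ref{lemmaLP} applies---is the right framework, but your chosen set of unit moves cannot be completed, and the step you yourself flag as ``delicate'' is a genuine gap, not a bookkeeping issue. First, a small slip: since the argument is $-z^2/4$, the Euler operator in $z$ is twice the Euler operator in the argument, so the numerator-raising identity reads $\bigl(z\frac{d}{dz}+2a\bigr)\Phi(z)=2a\,{}_1F_2\bigl(a+1;\,b,\,c;\,-z^2/4\bigr)$, not $z\frac{d}{dz}+a$; this is harmless. The real problem is raising a denominator parameter. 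Comparing coefficients shows that no operator $z\frac{d}{dz}+\alpha$ with constant $\alpha$ maps ${}_1F_2\bigl(a';\,b',\,c';\,-z^2/4\bigr)$ to a multiple of ${}_1F_2\bigl(a';\,b'+1,\,c';\,-z^2/4\bigr)$: one would need $2k+\alpha$ to be proportional to $(b')_k/(b'+1)_k=b'/(b'+k)$, which is not affine in $k$. Hence the contiguous relation you invoke is unavoidably a genuine linear combination of two distinct ${}_1F_2$'s, and this is fatal for your scheme, because $\mathcal{LP}$ is not closed under linear combinations; no arrangement of signs turns ``a combination of two $\mathcal{LP}$ functions'' into a conclusion of membership. (Also, your remark that $(a')_k/[(b')_k(c')_k]$ ``grows at most polynomially'' is off---it decays factorially---though the verification of \eqref{condLP} does go through thanks to the $4^{-k}$ and the extra $1/k!$.)

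The paper's proof sidesteps exactly this obstruction by a different choice of unit moves, all of which stay within the single-operator scope of Theorem \ref{lemmaLP}: the bare operator $z\frac{d}{dz}$ (the case $\alpha=0$) raises \emph{all three} parameters at once, via $z\frac{d}{dz}\Phi(z)=-\frac{az^2}{2bc}\,{}_1F_2\bigl(a+1;\,b+1,\,c+1;\,-z^2/4\bigr)$, the prefactor being removable since deleting the even zero at the origin and rescaling keep one inside $\mathcal{LP}$; and the operators $z\frac{d}{dz}+2(b-1)$, $z\frac{d}{dz}+2(c-1)$ \emph{lower} $b$ and $c$ individually. One applies the first identity $m$ times to reach $(a+m,\,b+m,\,c+m)$ and then lowers $b+m$ down to $b+n$ and $c+m$ down to $c+\ell$. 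The hypotheses $n\le m$, $\ell\le m$ are precisely the statement that a denominator can only be raised together with $a$, and $-b<n$, $-c<\ell$ guarantee that every shift constant $2(b'-1)$, $2(c'-1)$ used along the descent is positive. If you replace your ``raise $b$ alone'' step by this ``raise all, then lower back'' scheme, your induction closes and coincides with the paper's argument.
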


\begin{proof}
As an even entire function of order one,  it is easy to verify that
any entire function of the above type falls under the scope of
Theorem \ref{lemmaLP}. For $\,a\ne 0,\, b\ne 1,\,c\ne 1,$ if we consider the differential operators
\begin{equation}\label{oper}
D = z \frac{d}{dz},\quad D_b = z \frac{d}{dz} + 2(b-1), \quad D_c = z \frac{d}{dz} + 2(c-1),
\end{equation}
then it is routine to calculate
\begin{align*}
D\big[\Phi(z)\big] &= - \frac{\,a z^2\,}{2bc}\, {}_1F_2 \left[\begin{array}{c}
a+1\\ b+1, \,c+1\end{array}\biggr| - \frac{\,z^2}{4} \right],\\
D_b\big[\Phi(z)\big] &=  2(b-1)\,{}_1F_2 \left[\begin{array}{c}
a\\ b-1, \,c\end{array}\biggr| - \frac{\,z^2}{4} \right],\\
D_c\big[\Phi(z)\big] &= 2(c-1)\,{}_1F_2 \left[\begin{array}{c}
a\\ b, \,c-1\end{array}\biggr| - \frac{\,z^2}{4} \right].
\end{align*}

Since \eqref{PS1} can be obtained from $\Phi(z)$ by a successive applications of the operations \eqref{oper},
where we eliminate the multiplicative factor in each application,
the desired result follows by Theorem \ref{lemmaLP}.
\end{proof}

\subsection{Extensions of parameters}
{\bf 1.} In the first place, if we consider the case $\, b=a,$ then $\,\Phi(z) = \bJ_{c-1}(z),$
a member of $\mathcal{LP}$ for any $\,c>0,$ and hence the above transference principle
gives the following result known to Sokal \cite[Theorem 5]{So}:

\begin{proposition}\label{propositionP2}
If $a$ does not coincide with a non-positive integer and $\,c>0,$ then
the function of type
\begin{equation}
{}_1F_2 \left[\begin{array}{c}
a+m\\ a+n, \,c\end{array}\biggr| - \frac{\,z^2}{4} \right]
\end{equation}
belongs to $\mathcal{LP}$,  where $m, n$ are integers subject to the condition
$$\,m\ge 0,\quad -a<n\le m.$$
\end{proposition}

\medskip
{\bf 2.} We next consider extending the case $\,a>0,\,(b, c)\in\mathcal{S}_a.$
Since $\mathcal{LP}$ is closed under product and scaling of arguments,
the ${}_2F_3$ hypergeometric function obtained by (\cite[\S 5.41, (1)]{Wa}, \cite[\S 6.2, (39)]{Lu})
\begin{align}\label{ProB}
\bJ_\mu(z)\bJ_\nu(z) =
{}_2F_3 \left[\begin{array}{c} (\mu+\nu+1)/2,\, (\mu+\nu+2)/2\\
\mu+1,\,\nu+1,\,\mu+\nu+1\end{array}\biggr| - z^2\right]
\end{align}
belongs to $\mathcal{LP}$ for any $\,\mu>-1,\,\nu>-1\,$ unless $\,\mu+\nu+1 =0.$ In the case
$\,\mu=\nu,$ it reduces to the square of \emph{normalized} Bessel function
$$\bJ_\nu^2(z)=
{}_1F_2 \left[\begin{array}{c} \nu+1/2\\
\nu+1,\, 2\nu+1\end{array}\biggr| - z^2 \right],\quad \nu\ne -1/2,$$
and the transference principle gives rise to the following result.

\begin{proposition}\label{propositionP3}
For $\, a>-1/2,\,a\ne 0,$  the function of type
\begin{equation}
{}_1F_2 \left[\begin{array}{c}
a+m\\ a+1/2+n, \,2a+\ell\end{array}\biggr| - \frac{\,z^2}{4} \right]
\end{equation}
belongs to $\mathcal{LP}$, where $m, n, \ell$ are integers subject to the condition
$$\,m\ge 0,\quad -a-1/2<n\le m, \quad -2a<\ell\le m.$$
\end{proposition}

This result in the trivial instance $\,m=n=\ell=0\,$ was observed by several authors
(see \cite{So} for the relevant references). Another special case of interest arises when
$\,\mu=-\nu\,$ for which  \eqref{ProB} reduces to
$$\bJ_{-\nu}(z)\bJ_\nu(z)=
{}_1F_2 \left[\begin{array}{c} 1/2\\
-\nu+1,\, \nu+1\end{array}\biggr| - z^2 \right],\quad -1< \nu<1.$$

Since $\nu$ is free to vary, it is not difficult to see that the transference principle leads to the following
family of $\mathcal{LP}$ functions.

\begin{theorem}\label{propositionP4}
Given an integer $\, m\ge 0,$ the function of type
\begin{equation}\label{CS}
{}_1F_2 \left[\begin{array}{c}
1/2+m\\ b, \,c\end{array}\biggr| - \frac{\,z^2}{4} \right]
\end{equation}
belongs to $\mathcal{LP}$ for all $\,(b, c)\in\R_+^2\,$ lying on the line segments
$$b+c =k,\quad k = 1, 2, \cdots, 2m+2.$$
\end{theorem}

\begin{figure}[!ht]
 \centering
 \includegraphics[width=280pt, height= 280pt]{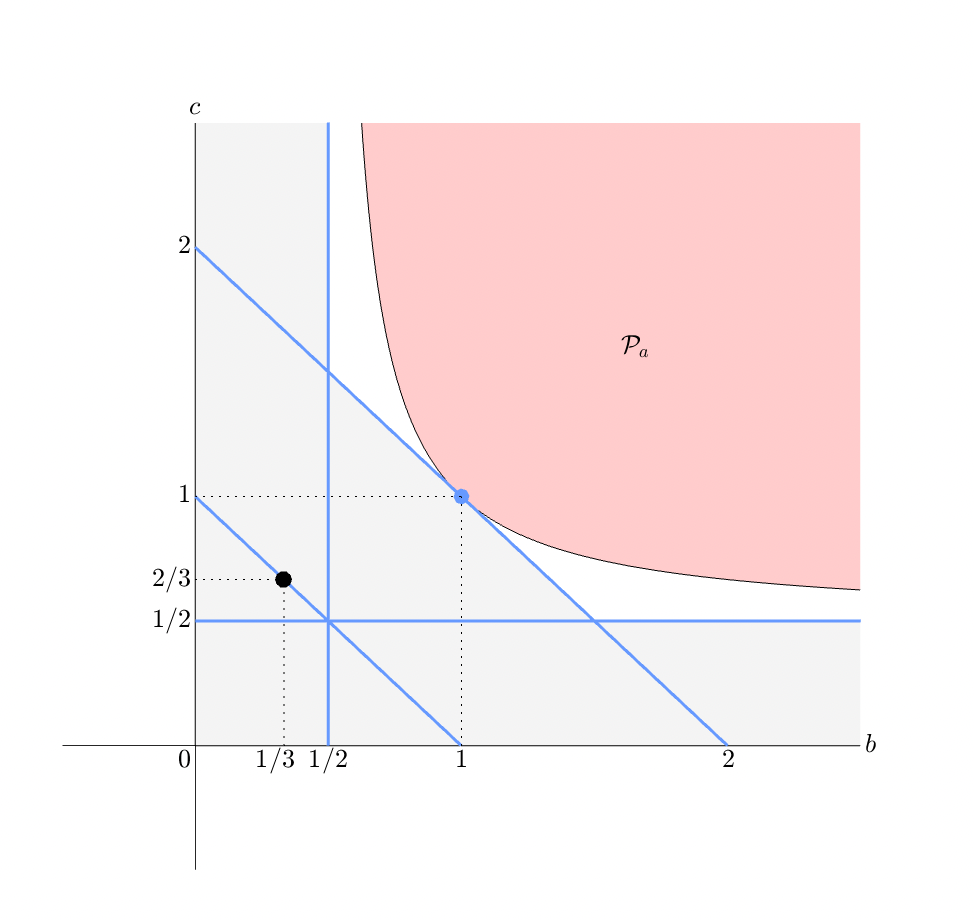}
 \caption{For $\,a=1/2,$ the region $\Lambda$ of parameter-pair $(b, c)$ for $\,\Phi\in\mathcal{LP}\,$
 consists of line segments, vertical and horizontal rays. If
 $\,(b, c)\in \mathcal{P}_a\setminus\{(1, 1)\},$ then $\,\Phi\notin \mathcal{LP}\,$ with infinitely many complex zeros.}
\label{Fig1}
\end{figure}

This non-trivial result appears to be unavailable in the literature. For example, if we take $\,a=1/2\,$ in
the definition of \eqref{HG1} and put 
\begin{align}\label{region1}
\Lambda &= \big\{(b, c)\in\R_+^2 : b+c =1\,\,\text{or}\,\,b+c =2\big\}\nonumber\\
&\qquad\cup\big\{b=1/2,\,c>0\big\}\,\,\cup\,\,\big\{b>0,\,c=1/2\big\},\nonumber\\
\mathcal{P}_a &= \big\{(b, c)\in\R_+^2 : b+c\ge 2,\,\,(2b-1)c\ge b,\,\,b>1/2\big\},
\end{align}
then Proposition \ref{propositionP2} and Theorem \ref{propositionP4} indicate that $\,\Phi\in\mathcal{LP}\,$ for $\,(b, c)\in \Lambda\,$ and
Theorem \ref{test1} shows that $\,\Phi\notin \mathcal{LP}\,$ for $\,(b, c)\in \mathcal{P}_a,\,(b, c)\ne (1, 1),$ with an infinity of complex zeros
(see Figure \ref{Fig1}).

We remark that $\Lambda$ contains the point $(1/3, 2/3),$ lying on the line segment $\,b+c =1,$
for which Craven and Csordas \cite{CCs} proved that the function
\begin{equation*}
{}_1F_2 \left[\begin{array}{c}
1/2\\ 1/3, \,2/3\end{array}\biggr| \frac{4z}{27} \right] =\sum_{k=0}^\infty \frac{(2k)!}{\,k! (3k)!} z^k
\end{equation*}
belongs to the class $\mathcal{LP}^+$.

\medskip

{\bf 3.} In the last place, let us consider the case $\,a>1/2,\,(b, c)\in \mathcal{Z}_a\,$ for which
$\Phi(z)$ defined by \eqref{HG1} belongs to $\mathcal{LP}$. Since $\,\mathcal{Z}_a = \big(a, \,a+1\big]\times I_a,$
if we apply the transference principle, for fixed $a, c$, then we find that $\,\Phi\in\mathcal{LP}\,$ for
$\,(b, c)\in \big(0, \,a+1\big]\times I_a.$ We now fix $a, b$ and apply the transference principle
to the $c$-parameter range $I_a$ as follows:

\begin{itemize}
\item Let us assume that $\,1/2<a<1\,$ so that $\,I_a =\big[1-a,\,2a-1/2\big].$
Since $\,2a-1/2>1\,$ only when $\,a>3/4,$
we may not extend $I_a$ further in the case $\,1/2<a\le 3/4.$ If $\,a>3/4,$
we may shift down $I_a$ by one unit.
Since $\,2a-3/2\ge 1-a\,$ only when $\,a\ge 5/6,$ this extension of $I_a$
amounts to the union of two intervals
$$\big(0,\,2a-3/2\big]\cup\big[1-a,\,2a-1/2]$$
when $\,3/4<a<5/6\,$ and the interval $\,\big(0, \,2a-1/2\big]\,$ when $\,5/6\le a<1.$

\item In the case $\,a\ge 1,$ we have $\,I_a = \big(0,\,a/2+1\big]\,$
so that shifting down by one unit is meaningless.
If we assume, however, that $\,a\ge 1+m\,$ with $m$ being a nonnegative integer,
we can extend $I_a$ by applying
the transference principle in a reverse way.  Indeed,
the transference principle implies that $\,\Phi\in\mathcal{LP}\,$ when the function
\begin{equation*}
{}_1F_2 \left[\begin{array}{c}
a-m\\ b-m, \,c-m\end{array}\biggr| - \frac{\,z^2}{4} \right]
\end{equation*}
belongs to $\mathcal{LP},$ that is, when
$$0<b-m\le a-m+1,\quad 0<c-m\le \frac 12(a-m) +1.$$
Shifting down $m$ units further, we find that the condition
$$0<b\le a+1,\quad 0<c\le \frac 12(a+m) +1$$
is sufficient for the membership $\,\Phi\in\mathcal{LP},$ provided $\,a\ge 1+m.$
\end{itemize}

What have been proved may be summarized as follows, where the Gaussian symbol
$[\alpha]$ of $\alpha\in\R\,$ denotes the largest integer not exceeding $\alpha$.

\begin{theorem}\label{test2}
For $\,a> 1/2,$ let $\mathcal{X}_a$ be the set of all ordered-parameter pairs
$\,(b, c)\in\R_+^2\,$ defined by
$\,\mathcal{X}_a = \big(0, \,a+1\big]\times L_a,$ where
$$L_a = \left\{\begin{aligned}
&{\,\,\Big[1-a,\, 2a- \frac 12\Big] \quad\text{for}\quad \frac 12<a\le \frac 34,}\\
&{\,\,\Big(0, \,2a-\frac 32 \Big] \cup\Big[1-a,\, 2a-\frac 12\Big]\quad\text{for}\quad \frac 34< a\le\frac 56,}\\
&{\,\,\Big(0, \,2a-\frac 12\Big] \quad\text{for}\quad \frac 56\le a <1,}\\
&{\,\,\Big(0, \,\frac 12\big(a+[a-1]\big) +1\Big] \quad\text{for}\quad  a\ge 1,}\end{aligned}\right.$$
and $\,\mathcal{X}_a^* =\left\{(b, c): (c, b)\in \mathcal{X}_a\right\}.$
If $\,(b, c)\in \big(\mathcal{X}_a \cup \mathcal{X}_a^*\big),$ then $\,\Phi\in \mathcal{LP}.$
\end{theorem}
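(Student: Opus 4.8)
The plan is to deduce Theorem~\ref{test2} from the base region of statement~(II) by iterating the transference principle, Lemma~\ref{lemmaT}. By (I)--(II), $\Phi\in\mathcal{LP}$ whenever $a>1/2$ and $(b,c)\in\mathcal{Z}_a=(a,a+1]\times I_a$ (at the points of $\mathcal{Z}_a\cap\mathcal{S}_a$, where (II) is silent, this is supplied by~(I), $\Phi$ being a squared normalized Bessel function). The argument runs in two stages, and the assertion for $\mathcal{X}_a^*$ follows at once from the symmetry of $\Phi(z)$ in $b$ and $c$. \emph{Stage one} pushes the $b$-interval down to $(0,a+1]$: fixing $a>1/2$ and $c\in I_a$ and invoking Lemma~\ref{lemmaT} with $m=\ell=0$ and $n$ any non-positive integer with $b+n>0$ (the constraints $-b<n\le m$, $-c<\ell\le m$ being immediate) yields $\Phi\in\mathcal{LP}$ with parameters $(a,b+n,c)$; as $b$ runs over $(a,a+1]$ and $n$ over the admissible integers, $b+n$ sweeps all of $(0,a+1]$. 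Hence $\Phi\in\mathcal{LP}$ for every $(b,c)\in(0,a+1]\times I_a$ and every $a>1/2$ --- an enlarged base region that I will reuse below at $a$ and at smaller parameter values.

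\emph{Stage two, the case $1/2<a<1$.} Here $I_a=[1-a,2a-1/2]$, and with $a,b$ frozen only the $c$-range is enlarged. A single downward $c$-shift, Lemma~\ref{lemmaT} with $m=n=0$ and $\ell=-1$, is admissible exactly when $c>1$, which forces $2a-1/2>1$, i.e.\ $a>3/4$; it then carries the part of $I_a$ above $1$ down onto $(0,2a-3/2]$, while a second shift is impossible since $2a-5/2\le0$. Consequently $L_a=I_a$ for $1/2<a\le3/4$; for $3/4<a<5/6$ the pieces $(0,2a-3/2]$ and $I_a$ are disjoint (because $2a-3/2<1-a$), giving $L_a=(0,2a-3/2]\cup[1-a,2a-1/2]$; and for $5/6\le a<1$ they overlap, giving $L_a=(0,2a-1/2]$. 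In every subcase the region $(0,a+1]\times L_a$ is covered.

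\emph{Stage two, the case $a\ge1$.} Now $I_a=(0,\tfrac12 a+1]$ is already anchored at $0$, so instead I would run Lemma~\ref{lemmaT} in reverse. For each integer $m$ with $0\le m\le[a-1]$ one has $a-m\ge1>1/2$, so stage one at the parameter $a-m$ shows that $\Phi$ with parameters $(a-m,b-m,c-m)$ lies in $\mathcal{LP}$ whenever $0<b-m\le(a-m)+1$ and $0<c-m\le\tfrac12(a-m)+1$; feeding this function into Lemma~\ref{lemmaT} with all three shift-integers equal to $m$ (legitimate because $b,c>0$) reproduces exactly $\Phi$ with parameters $(a,b,c)$. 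This gives $\Phi\in\mathcal{LP}$ for $b\in(m,a+1]$ and $c\in(m,\tfrac12(a+m)+1]$, and one further round of downward $b$-shifts (stage one again, with $c$ frozen) lowers the $b$-range to $(0,a+1]$. Taking the union over $m=0,1,\dots,[a-1]$, the $c$-intervals $(m,\tfrac12(a+m)+1]$ overlap consecutively --- $m+1<\tfrac12(a+m)+1$ being equivalent to $m<a$, which holds for $m\le[a-1]$ --- so their union telescopes to $(0,\tfrac12(a+[a-1])+1]=L_a$. Thus $\Phi\in\mathcal{LP}$ on $(0,a+1]\times L_a=\mathcal{X}_a$, and together with symmetry this proves the theorem.

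I expect the only genuinely delicate point to be the case $a\ge1$: one must check that the reverse use of the transference principle is valid --- the shift constraints $-b<n\le m$, $-c<\ell\le m$ hold precisely because $b,c$ stay strictly positive throughout --- and that the telescoping union of the shifted $c$-intervals fills $(0,\tfrac12(a+[a-1])+1]$ with no gaps, which hinges on the strict inequality $[a-1]<a$. Everything else is routine bookkeeping, once one notes that at each application the hypergeometric parameters remain positive, so Lemma~\ref{lemmaT} applies.
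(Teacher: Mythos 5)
Your proposal is correct and takes essentially the same route as the paper: start from the base region of (II) (with the $\mathcal{S}_a$ points supplied by (I)), extend the $b$-range to $(0,a+1]$ by downward shifts via Lemma~\ref{lemmaT}, handle $1/2<a<1$ by a single downward $c$-shift, and handle $a\ge 1$ by running the transference principle in reverse. The only cosmetic difference is that for $a\ge 1$ you take the union of the regions over all $m=0,\dots,[a-1]$ and let the $c$-intervals telescope, whereas the paper works with the maximal admissible $m$ and fills the rectangle by a further downward shift; both are valid and equivalent in substance.
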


In the special case $\,a=1,$ this theorem yields in particular
$$
{}_1F_2 \left[\begin{array}{c}
1\\ (r+1)/2, \,(r+2)/2 \end{array}\biggr|\, - \,\right]\in\mathcal{LP}^+
\quad\text{for all $\,-1<r\le 2,$}
$$
the same result established by P\'olya and Hille \cite{Hi}.

To illustrate how all of the above criteria are combined to specify the
range of parameters for $\mathcal{LP}$, we take $\,a=7/2 = 1/2 +3\,$
in the definition of \eqref{HG1}. For convenience, we shall denote by $\Delta$ the set of all ordered-pairs
$(b, c)$ for which $\,\Phi\in \mathcal{LP}$ (see Figure \ref{Fig2}).

\begin{figure}[!ht]
 \centering
 \includegraphics[width=280pt, height= 280pt]{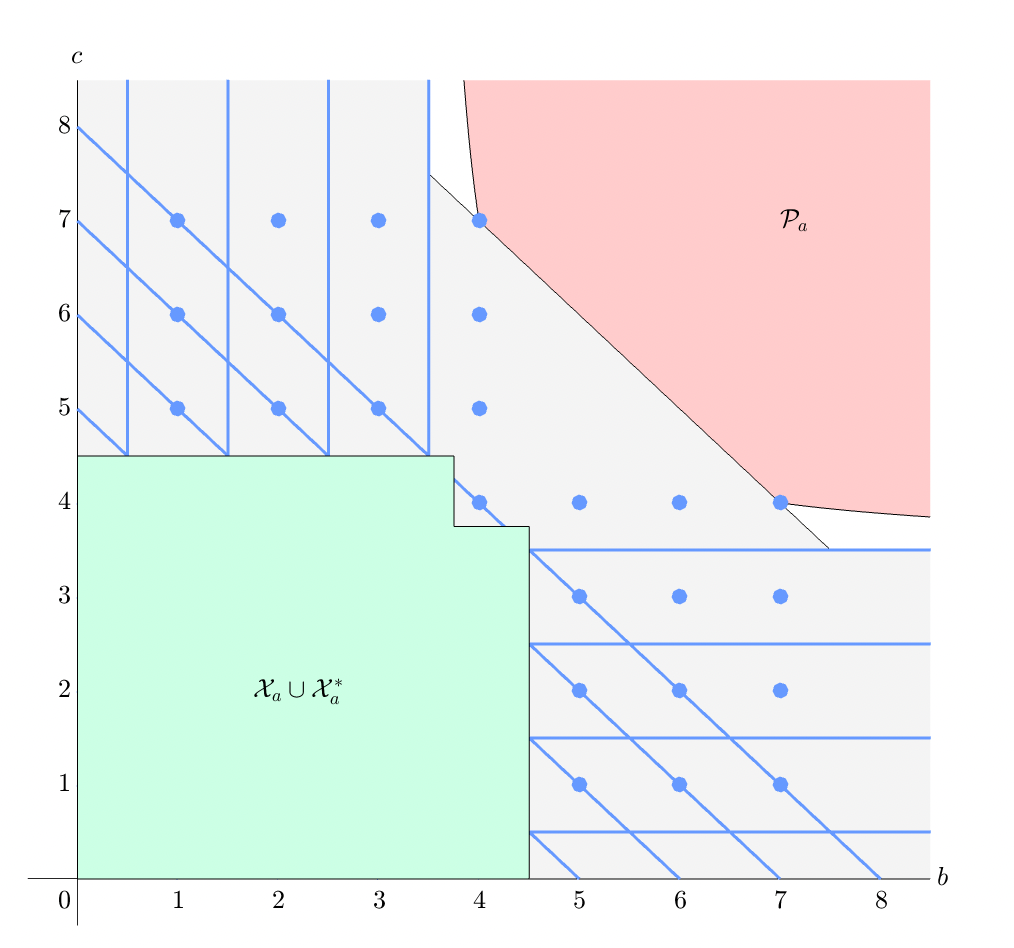}
 \caption{For $\,a=7/2,$ the region $\Delta$ of parameter-pair $(b, c)$ for which
 $\,\Phi\in\mathcal{LP}\,$ consists of line segments, vertical and horizontal rays,
 integer-lattice points and rectangles $\mathcal{X}_a, \mathcal{X}_a^*.$
 If $\,(b, c)\in \mathcal{P}_a\setminus\{(4, 7),\,(7, 4)\},$ then $\,\Phi\notin \mathcal{LP}$ with infinitely
 many complex zeros.}
\label{Fig2}
\end{figure}

\begin{itemize}
\item By Theorem \ref{test1}, if $\,(b, c)\in\mathcal{P}_a\setminus\big\{(4, 7), (7, 4)\big\},$ where
$$\mathcal{P}_a =\big\{ (b, c): b>7/2,\,\,b+c\ge 11, \,\,(2b-7)(2c-7)\ge 7\big\},$$
then $\,\Phi\notin\mathcal{LP}\,$ and $\Phi(z)$ has an infinity of complex zeros.

\item By Proposition \ref{propositionP2}, $\Delta$ contains four vertical rays $\,b=7/2-k,\,c>0,$ and
four horizontal rays $\,b>0,\,c=7/2-k,$ where $\, k=0, 1, 2, 3.$ Similarly, by Proposition \ref{propositionP3},
$\Delta$ contains the lattice points
\begin{align*}
&\left\{(m, n)\in \mathbb{Z}_+^2 : 1\le m\le 4,\, 1\le n\le 7\right\}\\
&\qquad \cup\,\left\{(m, n)\in \mathbb{Z}_+^2 : 1\le m\le 7,\, 1\le n\le 4\right\}.
\end{align*}

\item By Theorem \ref{propositionP4}, $\Delta$ contains eight line segments defined by
$$b+c =k,\,b>0,\,c>0,\quad\text{where $\, k=1, \cdots, 8.$}$$

\item By Theorem \ref{test2}, $\Delta$ contains the rectangles
$$\mathcal{X}_a = \big(0, \, 9/2\big] \times \big(0, \, 15/4\big],\quad
\mathcal{X}_a^* = \big(0, \, 15/4\big]\times \big(0, \, 9/2 \big].$$
\end{itemize}

\bigskip

\noindent
{\bf Acknowledgements.}
This research by Yong-Kum Cho was supported by
the National Research Foundation of Korea funded by
the Ministry of Science and ICT of Korea
(2021R1A2C1007437).

\end{document}